\documentclass[11pt]{article}

\usepackage[T1]{fontenc}
\usepackage{lmodern}
\usepackage[margin=1.15in]{geometry}
\usepackage{amsmath,amssymb,amsthm}
\usepackage{booktabs}
\usepackage{microtype}
\usepackage{array}
\usepackage{longtable}
\usepackage[hidelinks]{hyperref}

\newtheorem{result}{Result}[section]
\newtheorem{theorem}[result]{Theorem}
\newtheorem{lemma}[result]{Lemma}
\newtheorem{proposition}[result]{Proposition}
\newtheorem{corollary}[result]{Corollary}
\theoremstyle{definition}
\newtheorem{definition}[result]{Definition}
\theoremstyle{remark}

\newcommand{\Z}{\mathbb Z}
\newcommand{\F}{\mathbb F}
\newcommand{\Q}{\mathbb Q}
\newcommand{\CW}{CW}
\newcommand{\ICW}{ICW}

\newcommand{\wt}{\operatorname{wt}}

\newcommand{\corr}{\operatorname{cor}}

\title{Character and Multiplier Obstructions for Circulant Weighing Matrices}
\author{Ming Ming Tan\\
School of Computer and Cyber Sciences,\\
Augusta University, Augusta, Georgia, USA}
\date{}

\hypersetup{
  pdftitle={Character and Multiplier Obstructions for Circulant Weighing Matrices},
  pdfauthor={Ming Ming Tan},
  pdfsubject={Character and Multiplier Obstructions for Circulant Weighing Matrices},
  pdfkeywords={circulant weighing matrices, characters of contraction kernels, multipliers, lifting, Eisenstein integers, Gaussian integers, cyclic codes, computer-assisted proof}
}

\begin{document}
\maketitle

\begin{abstract}
We prove the nonexistence of eight circulant weighing matrices from the
remaining table of orders at most $200$ and weights at most $100$.  The
proofs combine contraction, character evaluation on the kernel of a
contraction, multiplier methods, and exact finite computations.  For
$\CW(105,36)$, the contracted matrix is unique up to equivalence.  Applying
a nonprincipal character of the $C_3$ kernel gives an element over the
Eisenstein integers; reduction modulo $1-\omega$ gives a word in a ternary
cyclic code of length $35$, and exact enumeration rules out every required
Eisenstein-unit lift.  For $\CW(140,36)$, the real-valued character
$Y\mapsto-1$ of the $C_4$ kernel is incompatible with the same contracted
class.  For weight $64$, the faithful character $Y\mapsto i$ of a $C_4$
kernel first gives an element of $\Z[i][C_m]$; a generalized multiplier then
forces constancy on multiplication-by-$2$ orbits, and exact correlation
calculations eliminate orders $140$, $180$, and $196$.  The three weight-$49$
cases are settled by the ordinary prime-power multiplier, with contraction
where needed.  Consequently none of
\[
\begin{gathered}
 \CW(105,36),\ \CW(140,36),\ \CW(116,49),\ \CW(120,49),\\
 \CW(192,49),\ \CW(140,64),\ \CW(180,64),\ \CW(196,64)
\end{gathered}
\]
exists.
\end{abstract}

\noindent\textbf{2020 Mathematics Subject Classification:} 05B10, 05B20, 94B15

\medskip
\noindent\textbf{Keywords:} circulant weighing matrices, integer circulant
weighing matrices, characters of contraction kernels, multipliers, lifting,
Eisenstein integers, Gaussian integers, cyclic codes, computer-assisted proof

\section{Introduction}

A weighing matrix $W(v,n)$ is a square matrix of order $v$ with entries
$0,\pm1$ satisfying
\[
        WW^T=nI_v.
\]
The integer $n$ is called its weight.  If each row after the first is a cyclic
shift of the preceding row, the matrix is a circulant weighing matrix,
denoted by $\CW(v,n)$.

The existence problem for circulant weighing matrices has been studied using
group rings, character theory, multipliers, and contracted integer weighing
matrices.  An updated form of Strassler's table was given in
\cite{Tan2018}.  Arasu, Gordon, and Zhang subsequently resolved several
entries and listed the remaining parameters in \cite[Table~10]{AGZ2021}.
Version~1.3 of the La Jolla Circulant Weighing Matrix Repository records
$\CW(105,36)$, $\CW(140,36)$, $\CW(120,49)$, $\CW(192,49)$,
$\CW(140,64)$, $\CW(180,64)$, and $\CW(196,64)$ as open
\cite{GordonRepository}.  It records $\CW(116,49)$ as nonexistent by
citing the earlier preprint, a discrepancy discussed below.

Our main result is the following.

\begin{theorem}\label{thm:main}
None of the following circulant weighing matrices exists:
\[
\begin{gathered}
 \CW(105,36),\quad \CW(140,36),\\
 \CW(116,49),\quad \CW(120,49),\quad \CW(192,49),\\
 \CW(140,64),\quad \CW(180,64),\quad \CW(196,64).
\end{gathered}
\]
\end{theorem}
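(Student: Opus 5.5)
The plan is to treat the eight parameter pairs separately, using the strategy the abstract indicates for each, with group rings throughout. A $\CW(v,n)$ is an element $A\in\Z[C_v]$ with coefficients in $\{0,\pm1\}$ and $AA^{(-1)}=n$. For a factorization $v=mk$, the natural map $C_v\to C_m$ with kernel $K\cong C_k$ sends $A$ to a contracted element $\bar A\in\Z[C_m]$. This $\bar A$ is an integer circulant weighing matrix with coefficients bounded by $k$ in absolute value and with $\bar A\bar A^{(-1)}=n$. If we write $A=\sum_{j}A_jY^j$ with $Y$ a generator of $K$ and $A_j$ lifted to $\Z[C_m]$, then any character $\chi$ of $K$, extended to a partial character of $C_v$, gives an element $A_\chi=\sum_j\chi(Y)^jA_j$. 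This element lies in $\Z[\chi(Y)][C_m]$ and again satisfies $A_\chi\overline{A_\chi}^{(-1)}=n$. So the whole proof reduces to finite, exactly checkable constraints on these contracted or twisted objects.

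\textbf{Weight $49$.} For $\CW(116,49)$, $\CW(120,49)$ and $\CW(192,49)$, I would apply the ordinary multiplier theorem with $p=7$, $n=p^2$, $\gcd(7,v)=1$. I expect $7$ to act as a multiplier of $A$, possibly only after contracting to a subgroup where the self-conjugacy hypotheses hold. For example, $7$ modulo $29$ has a suitable order, and for $120$ and $192$ I would check the order of $7$ modulo the relevant factors, contracting if necessary. After translation, $A^{(7)}=A$, so $A$ is a union of signed $7$-orbits on $C_v$. I would then enumerate the signed orbit unions of size $49$ and show that none satisfies $AA^{(-1)}=49$. Each case should take only a short computation.

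\textbf{Weight $36$.} For $\CW(105,36)$, contract by $C_3$ to $\Z[C_{35}]$. I would first classify all integer weighing elements $\bar A$ with $|\text{coeff}|\le3$ and $\bar A\bar A^{(-1)}=36$ up to equivalence, which should yield a unique class. Next, apply $\chi(Y)=\omega$ to get $B\in\Z[\omega][C_{35}]$ with $BB^{*}=36$, and reduce modulo $1-\omega$: then $B\equiv\bar A \pmod{1-\omega}$. Since $36=(1-\omega)^4(1-\bar\omega)^{-4}\cdots$, or more precisely $3^2\cdot4$ in $\Z[\omega]$ up to units, $B$ is divisible by a power of $(1-\omega)$. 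This pins the residue into a ternary cyclic code of length $35$. Each coefficient of $B$ must also be compatible with the fibre structure of $\bar A$: it is a sum of at most three entries $\pm\omega^j$ whose plain sum is the corresponding coefficient of $\bar A$. I would then enumerate the finitely many lifts and show none has norm $36$.

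For $\CW(140,36)$, contract by $C_4$ to $C_{35}$, which by the same classification gives the same class. The character $Y\mapsto-1$ yields $C\in\Z[C_{35}]$ with $CC^{(-1)}=36$ and $C\equiv\bar A\pmod 2$ coefficientwise, with $|c_g|\le|\bar a_g|$ and the same parity. Checking the lifts of the unique class should show that none works.

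\textbf{Weight $64$.} For $\CW(140,64)$, $\CW(180,64)$ and $\CW(196,64)$, take a $C_4$ kernel and the faithful character $Y\mapsto i$. This gives $D\in\Z[i][C_m]$ with $DD^{*}=64=(1+i)^{12}\cdot$unit, for $m=35,45,49$. Since $2$ is self-conjugate modulo $m$ in $\Z[i][C_m]$, a generalized multiplier argument should show $D^{(2)}=\epsilon g D$. Then, up to normalization, $D$ is constant on the orbits of multiplication by $2$ in $C_m$. The remaining step is to run exact correlation calculations over the few orbit-constant candidates, with coefficient bounds coming from the $C_4$ fibres, and show that no candidate satisfies $DD^{*}=64$.

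\textbf{Main obstacle.} I expect the hardest part to be $\CW(105,36)$. It requires both the uniqueness classification of the contracted matrix over $C_{35}$ and the exhaustive lift analysis in the ternary code. The search space has to be kept finite and small by exploiting the cyclic-code structure. To do this I would first fix a normalization using the automorphisms and translations of $C_{35}$.
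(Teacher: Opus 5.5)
Your architecture is the paper's: multiplier orbits for weight $49$, the unique contracted class over $C_{35}$ tested by kernel characters for weight $36$, and $Y\mapsto i$ followed by a generalized multiplier for weight $64$. (For weight $49$, $\gcd(v,7)=1$ alone makes $7$ a multiplier; contraction to $C_{20}$ or $C_{64}$ only shrinks the search.) However, the $\CW(140,36)$ step does not work as written. The inequality $|c_g|\le|\bar a_g|$ is false: the zero-sum fibre $(1,-1,0,0)$ has alternating sum $2$ over $\bar a_g=0$. More seriously, the constraints you list ($CC^{(-1)}=36$, matching parity, that inequality) are all satisfied by $C=\bar A$ itself, so no check against them can give a contradiction. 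Two ideas are missing. First, a fibre with sum $\pm3$ consists of three equal signs and a zero, so its alternating sum is exactly $\pm1$. Second, $C$ is itself an $\ICW_4(35,36)$ of augmentation $\pm6$, so the classification applies to $C$ and forces every nonzero coefficient to be $\pm3$, which is incompatible with $|c_g|=1$ at the four support positions of $\bar A$. This needs the classification with coefficient bound $4$, not $3$ as you state it. To make that classification a small finite computation you need a multiplier, which your weight-$36$ plan never names: McFarland's theorem applies because $\gcd(35,36)=1$ and $4\equiv2^2\equiv3^{10}\pmod{35}$, leaving nine orbit coefficients.

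In the weight-$64$ step, $2$ is \emph{not} self-conjugate modulo $m$ for $m=35,45,49$. Indeed $-1$ is not a power of $2$ modulo $7$, and modulo $45$ it would need an exponent that is both $\equiv2\pmod4$ and $\equiv3\pmod6$. Had it been self-conjugate, the standard Turyn argument applied to $DD^{(-1)}=64$ would give $D=8\eta g$, contradicting the fibre bound $|a_j|,|b_j|\le2$ with no enumeration at all. Also $D^{(2)}$ is not a legitimate twist, since $\gcd(2,4)\ne1$. The correct hypothesis is weaker: take $t\equiv1\pmod4$ and $t\equiv2\pmod m$. Then $\sigma_t$ lies in the decomposition group of $2$ in $\Q(\zeta_{4m})$, because $2$ is totally ramified in $\Q(i)$ and has Frobenius $\sigma_2$ on $\Q(\zeta_m)$. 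The generalized multiplier theorem then gives $D^{(t)}=\eta gD$; since $\gcd(t-1,m)=1$ you can translate $g$ away, and $\epsilon(D)\ne0$ forces $\eta=1$. Finally, for $\CW(105,36)$ the divisibility $B=(1-\omega)D'$ comes from $B\equiv\bar A\equiv0\pmod{1-\omega}$, because every coefficient of $\bar A$ is $0$ or $\pm3$; it does not come from factoring $36$. The ternary code arises from reducing $D'$, since the reduction of $B$ is zero. It is the fibre analysis that makes this reduction a nonzero word of weight exactly $12$ with $RR^{(-1)}=0$: sum-$\pm3$ fibres map to $0$, and nonzero zero-sum fibres map to $(1-\omega)$ times a unit. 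That weight-$12$ structure is what the enumeration needs.
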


The multiplier-orbit and contraction arguments used in this paper belong to
the established framework of \cite{AGZ2021}; in particular, no new
multiplier theorem is claimed.  The additional step in the weight-$36$ cases
is to apply nonprincipal characters to the kernel of a contraction, thereby
retaining information that is lost when only the fiber sums are recorded.
This gives group-ring equations over cyclotomic integer rings and, for order
$105$, a further reduction to a cyclic code.  Non-lifting arguments have
precedent in \cite{ArasuNabavi2011}, and code-theoretic restrictions on
weighing matrices have precedent in \cite{HaradaMunemasa2012}.

Recent work of Gordon \cite{Gordon2026} also studies lifting in connection
with circulant weighing matrices, but in a different setting: a cyclic
difference set is lifted to a relative difference set, from which a
circulant weighing matrix may be constructed.  Here we begin instead with a
bounded integral contraction of a signed group-ring element and ask whether
it lifts to a $\{0,\pm1\}$-valued element.  The nonprincipal character values
within the fibers provide obstructions to such a lift.  The two approaches
are therefore complementary.

Section~3 describes the relation among the three proof families.  For weight
$36$, the principal character first gives the contracted matrix of order
$35$, and a nonprincipal character of the same contraction kernel is then
used to test whether the contraction lifts.  For weight $64$, the order is
reversed: the faithful character $Y\mapsto i$ of a $C_4$ kernel first gives
an element of $\Z[i][C_m]$, and the generalized multiplier theorem of
\cite{Tan2018} is then applied.  The weight-$49$ proofs remain in the
integral group ring and apply the ordinary prime-power multiplier directly,
possibly after a contraction.

The history of $\CW(116,49)$ requires clarification.  Version~1 of the
arXiv preprint \cite[Proposition~10]{AGZv1} claimed its nonexistence.
However, the displayed equations in that proof have principal sum $9$ and
energy $81$, rather than principal sum $7$ and energy $49$.  The proposition
was not retained in the published version \cite{AGZ2021}, where
$\CW(116,49)$ appears in Table~10 among the remaining open cases.  We give a
complete proof below.

Section~2 fixes notation and records the required multiplier and character
tools.  Section~3 describes the common proof architecture and formulates the
character reductions associated with the contraction kernels.
Section~4 treats the two weight-$36$ cases as
lifting problems over the unique equivalence class of contracted
matrices of order $35$.
Section~5 develops the Gaussian reduction and resolves the three weight-$64$
parameters.  Section~6 treats the weight-$49$ cases by the standard
multiplier-orbit method.  Section~7 describes the exact verification
programs, Section~8 discusses the methodological conclusions, and Section~9
contains the AI usage disclosure.  As in \cite{Tan2018}, previously known
statements are labeled as ``Result.''

\section{Preliminaries}

Let $G$ be a finite abelian group.  The exponent of $G$, denoted by
$\operatorname{exp}(G)$, is the least positive integer $e$ such that $g^e=1$
for every $g\in G$; in particular, $\operatorname{exp}(C_m)=m$.
Let $\zeta_u$ be a primitive $u$th root of unity.  An element of the
cyclotomic group ring has the form
\[
        A=\sum_{g\in G}a_g g,\qquad a_g\in\Z[\zeta_u].
\]
Its augmentation is the sum of its coefficients,
\[
        \epsilon(A)=\sum_{g\in G}a_g.
\]
For integral coefficients and an integer $t$, put
\[
        A^{(t)}=\sum_{g\in G}a_g g^t.
\]
More generally, if $\gcd(t,u|G|)=1$, let
\[
        A^{(t)}=\sum_{g\in G}\sigma_t(a_g)g^t,
        \qquad \sigma_t(\zeta_u)=\zeta_u^t.
\]
For cyclotomic coefficients define
\[
        A^{(-1)}=\sum_{g\in G}\overline{a_g}\,g^{-1}.
\]
These conventions agree with the usual group-ring power and involution when
the coefficients are integral.  A scalar on the right-hand side of a
group-ring equation is understood to multiply the identity element of $G$.

A $G$-invariant weighing matrix of weight $n$ is equivalent to an element
$A\in\Z[G]$ with coefficients in $\{0,\pm1\}$ and
\begin{equation}\label{eq:weighing}
        AA^{(-1)}=n.
\end{equation}
When $G=C_v$, this is a $\CW(v,n)$.  If $n=s^2$, applying the augmentation to
\eqref{eq:weighing} gives
\[
        \epsilon(A)^2=s^2.
\]
After replacing $A$ by $-A$ if necessary, we assume throughout that
\begin{equation}\label{eq:augmentation}
        \epsilon(A)=s.
\end{equation}

Let $v=dm$, write $C_m=\langle X\rangle$, and let
$\rho:C_v\longrightarrow C_m$ be the natural projection.  Its kernel has
order $d$.  If $\gcd(d,m)=1$, we identify
\[
        C_v\cong C_m\times C_d,
\]
in which case $\rho$ forgets the $C_d$-coordinate and its kernel is the
second factor.  If $A$ represents a $\CW(v,n)$, then
\[
        B=\rho(A)=\sum_{j=0}^{m-1}b_jX^j
\]
satisfies
\begin{equation}\label{eq:icw-basic}
        BB^{(-1)}=n,\qquad |b_j|\le d.
\end{equation}
Such an element is called an integer circulant weighing matrix and is denoted
by $\ICW_d(m,n)$.  If $\epsilon(B)=s$, comparison of the augmentation and the
identity coefficient gives
\begin{equation}\label{eq:sum-energy}
        \sum_{j=0}^{m-1}b_j=s,
        \qquad
        \sum_{j=0}^{m-1}b_j^2=n.
\end{equation}
Two elements of $\Z[C_m]$ are called equivalent if one is obtained from the
other by a cyclic translation, an automorphism $X\mapsto X^a$ with
$\gcd(a,m)=1$, and possibly multiplication by $-1$.

For later use, define the periodic correlation
\begin{equation}\label{eq:corr}
        \corr_t(B)=\sum_{j=0}^{m-1}b_{j+t}b_j,
\end{equation}
where subscripts are reduced modulo $m$.  Thus $BB^{(-1)}=n$ if and only if
$\corr_0(B)=n$ and $\corr_t(B)=0$ for $t\ne0$.

\begin{definition}
Let $M\in\Z[C_m]$ and let $t$ be relatively prime to $m$.  The integer $t$ is
a multiplier of $M$ if
\[
        M^{(t)}=gM
\]
for some $g\in C_m$.
\end{definition}

We use the following theorem of McFarland; see
\cite[Theorem~9.1]{McFarland1970} and its group-ring formulation in
\cite[Result~3.2]{Tan2018}.

\begin{result}[McFarland]\label{res:contracted-multiplier}
Let $M$ be an $\ICW_d(m,n)$ with $\gcd(m,n)=1$, and write
$n=\prod_{i=1}^r p_i^{e_i}$.  Suppose that $t$ is relatively prime to $m$ and,
for each $i$, there is an integer $f_i$ such that
\[
        t\equiv p_i^{f_i}\pmod m.
\]
Then $t$ is a multiplier of $M$.  Moreover, $M$ has a translate $M'$ for
which
\[
        {M'}^{(t)}=M'.
\]
\end{result}

For weight a prime power we also use the familiar multiplier result; see
\cite{ArasuSeberry1996,Tan2018,AGZ2021}.

\begin{result}[Prime-power multiplier]\label{res:prime-multiplier}
Let $p$ be prime.  If a $\CW(v,p^{2e})$ exists and $\gcd(v,p)=1$, then a
translate of its group-ring element is fixed by $p$.
\end{result}

We also use the cyclotomic version of the multiplier theorem and its
normalization; see \cite[Theorem~3.4 and Corollary~3.5]{Tan2018}.

\begin{result}[Generalized multiplier theorem]\label{res:generalized-multiplier}
Let $X\in\Z[\zeta_u][G]$, let
$v=\operatorname{lcm}(u,\operatorname{exp}(G))$, and let $t$ satisfy
$\gcd(t,u|G|)=1$.  Suppose
\[
        XX^{(-1)}=n,\qquad \gcd(n,|G|)=1.
\]
If the automorphism $\sigma_t$ fixes every prime ideal of $\Z[\zeta_v]$
lying above a prime divisor of $n$, then
\[
        X^{(t)}=\eta gX
\]
for some $g\in G$ and some root of unity
$\eta=\pm\zeta_u^a$.  If moreover
$\gcd(t-1,\operatorname{exp}(G))=1$, a group translate $Y$ of $X$ may be
chosen so that
\[
        Y^{(t)}=\eta Y.
\]
\end{result}

For a finite abelian group $H$, the \emph{principal character}
$\chi_0$ is defined by $\chi_0(h)=1$ for every $h\in H$; all other
characters are called \emph{nonprincipal}.  A character $\chi$ is
\emph{faithful} if $\ker(\chi)=\{1\}$.  When $H$ is the kernel of a
projection $G\times H\to G$, we shall apply characters of $H$ to retain
information about the coefficients within each fiber of that projection.

The following elementary observation is used throughout.

\begin{lemma}[Character evaluation on a contraction kernel]\label{lem:kernel-character}
Write
\[
 A=\sum_{g\in G}\sum_{h\in H}a_{g,h}(g,h)\in\Z[G\times H]
\]
and suppose $AA^{(-1)}=n$.  For a fixed $g\in G$, call the vector
$(a_{g,h})_{h\in H}$ the $H$-fiber over $g$.  If $\chi$ is a character of
$H$, extend it linearly and put
\[
        A_\chi=(\operatorname{id}\times\chi)(A)
        =\sum_{g\in G}\left(\sum_{h\in H}a_{g,h}\chi(h)\right)g.
\]
Then
\[
        A_\chi A_\chi^{(-1)}=n.
\]
Thus the coefficient of $g$ in $A_\chi$ is the character-weighted sum of the
entries in the $H$-fiber over $g$.
\end{lemma}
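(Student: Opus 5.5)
The plan is to realize $A_\chi$ as the image of $A$ under a ring homomorphism that commutes with the involution; the identity then follows by applying that homomorphism to $AA^{(-1)}=n$. Let $\Phi=\operatorname{id}\times\chi$ be the linear extension of $(g,h)\mapsto \chi(h)g$, a map $\Z[G\times H]\to\Z[\zeta_u][G]$, where $u=\exp(H)$ so that every value of $\chi$ lies in $\Z[\zeta_u]$. First I will check that $\Phi$ is a ring homomorphism. Since $\chi$ is a group homomorphism $H\to\Z[\zeta_u]^\times$, the map $(g,h)\mapsto\chi(h)g$ is a group homomorphism from $G\times H$ into the unit group of $\Z[\zeta_u][G]$. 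Extending it $\Z$-linearly therefore gives a ring homomorphism, by the universal property of the group ring.

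Next I will show that $\Phi$ intertwines the two involutions, that is, $\Phi(A^{(-1)})=\Phi(A)^{(-1)}$. Here the left-hand $(-1)$ is the ordinary involution on integral coefficients, and the right-hand one is the cyclotomic involution defined in the preliminaries. By linearity, and because the $a_{g,h}$ are integers, it suffices to check this on a basis element $(g,h)$. We have $\Phi((g,h)^{-1})=\chi(h^{-1})g^{-1}$. On the other side, $(\chi(h)g)^{(-1)}=\overline{\chi(h)}\,g^{-1}$. These agree because $\chi(h)$ is a root of unity, so $\overline{\chi(h)}=\chi(h)^{-1}=\chi(h^{-1})$. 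This is the only point needing care: the involution on $\Z[\zeta_u][G]$ must conjugate the coefficients, which is exactly the convention the paper adopted.

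Finally, applying $\Phi$ to $AA^{(-1)}=n\cdot(1,1)$ gives $A_\chi A_\chi^{(-1)}=\Phi(A)\Phi(A^{(-1)})=n\,\Phi((1,1))=n\chi(1)\cdot 1=n$. The last sentence of the statement is just the coefficient bookkeeping: grouping the terms of $\Phi(A)=\sum_g\sum_h a_{g,h}\chi(h)g$ by $g$ shows that the coefficient of $g$ is $\sum_h a_{g,h}\chi(h)$. I expect no real obstacle; the only substantive check is the compatibility of the involutions in the second step.
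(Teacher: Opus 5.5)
Your proposal is correct and matches the paper's proof: the paper likewise notes that $\operatorname{id}\times\chi$ is a ring homomorphism commuting with the involution and applies it to $AA^{(-1)}=n$. You also supply details the paper leaves implicit, namely the universal property of the group ring and the check $\overline{\chi(h)}=\chi(h^{-1})$ on basis elements.
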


\begin{proof}
The character extension is a ring homomorphism and commutes with the
involution.  Applying it to $AA^{(-1)}=n$ proves the assertion.
\end{proof}

\section{Proof architecture and relation to previous methods}

Let $A\in\Z[C_v]$ represent a putative circulant weighing matrix, so that
\[
        AA^{(-1)}=n.
\]
Every proof below replaces this group-ring equation by a finite system of
exact equations in bounded integer or cyclotomic-integer coefficients,
either on a quotient group or on multiplier orbits.  When $v=dm$ with
$\gcd(d,m)=1$, identify $C_v\cong C_m\times C_d$.  Applying the principal
character of the $C_d$ factor records the sum of the coefficients in each
$C_d$-fiber and gives the usual contraction to an $\ICW_d(m,n)$.  Applying
a nonprincipal character of the same factor records a weighted sum of the
entries in each fiber and may distinguish fibers having the same sum.  The
multiplier is applied before or after this character evaluation according to
the available coprimality conditions and the resulting coefficient ring.

Here and below, $\omega$ denotes a primitive cube root of unity.  The proof
chains governing the parameters in Theorem~\ref{thm:main} are
\[
\begin{aligned}
\CW(105,36)
 &\longrightarrow \ICW_3(35,36)
 \longrightarrow \Z[\omega][C_{35}]\\
 &\longrightarrow \F_3[C_{35}]
 \longrightarrow \text{no Eisenstein-unit lift},\\[1mm]
\CW(140,36)
 &\longrightarrow \ICW_4(35,36)
 \longrightarrow \Z[C_{35}]
 \longrightarrow \text{incompatible $Y\mapsto-1$ fiber values},\\[1mm]
\CW(4m,64)
 &\longrightarrow \Z[i][C_m]
 \longrightarrow \text{generalized multiplier}\\
 &\longrightarrow \text{Gaussian correlation obstruction},\\[1mm]
\CW(v,49)
 &\longrightarrow \Z[C_v]\ \text{or an integral contraction}
 \longrightarrow 7\text{-multiplier orbits}\\
 &\longrightarrow \text{integer correlation obstruction}.
\end{aligned}
\]

All four chains begin with the same group-ring equation and end with exact
periodic-correlation conditions, but the reductions retain different
information.  The weight-$49$ arguments are closest to the method of
\cite{AGZ2021}: they use the same multiplier-orbit and contraction framework,
together with coefficient equations arising from $BB^{(-1)}=49$.  The
weight-$36$ arguments start from contracted integer circulant weighing
matrices already isolated there and use nonprincipal characters of the contraction kernels to test the
unresolved lifting problem.  For $\CW(105,36)$, reduction modulo
the prime $1-\omega$ of $\Z[\omega]$ lying above $3$ converts that lifting
problem into a cyclic ternary self-orthogonality condition.  This is related
in spirit to code-based restrictions on weighing matrices
\cite{HaradaMunemasa2012}, although the code arises here only after the
character reduction.  The weight-$64$ chain is a refined application
of the cyclotomic-coefficient and generalized-multiplier framework of
\cite{Tan2018}: the faithful character $Y\mapsto i$ of the $C_4$
factor is applied first, and an exact
Gaussian orbit calculation then completes cases not decided by the broader
arithmetic criteria of that work.  Thus the contribution is not a new
multiplier theorem, but the parameter-specific ordering and combination of
established tools and the resulting exact non-lifting computations.

\subsection{\texorpdfstring{An order-$3$ kernel and an Eisenstein reduction}{An order-3 kernel and an Eisenstein reduction}}

Let $\omega$ be a primitive cube root of unity and put
$\lambda=1-\omega$.  Then
\[
        \lambda\overline{\lambda}=3,
        \qquad
        \Z[\omega]/(\lambda)\cong\F_3.
\]
The six units of $\Z[\omega]$ are
\[
        \{\pm1,\pm\omega,\pm\omega^2\};
\]
they are also the sixth roots of unity.  We refer to them as
\emph{Eisenstein units}.  For
$R=\sum_{g\in G}r_gg\in\F_3[G]$, write
\[
        \wt(R)=|\{g\in G:r_g\ne0\}|
\]
for its Hamming weight.

We first record the general form of the $C_3$-character obstruction.

\begin{theorem}\label{thm:C3-lift}
Let $G$ be a finite abelian group, write $C_3=\langle Y\rangle$, and suppose
$A\in\Z[G\times C_3]$ has coefficients in $\{0,\pm1\}$ and satisfies
$AA^{(-1)}=n$.  Suppose that applying the principal character
$\chi_0(Y)=1$ to the $C_3$ factor gives
\[
        B=3C\in\Z[G],
\]
where $C$ has coefficients in $\{0,\pm1\}$.  Then $9\mid n$ and there exists
$D\in\Z[\omega][G]$ such that
\begin{enumerate}
\item every nonzero coefficient of $D$ is an Eisenstein unit;
\item $D$ has exactly $n/3$ nonzero coefficients;
\item $DD^{(-1)}=n/3$.
\end{enumerate}
Consequently, reducing $D$ modulo $\lambda=1-\omega$ gives
$R\in\F_3[G]$ with
\[
        \wt(R)=n/3,
        \qquad
        RR^{(-1)}=0.
\]
\end{theorem}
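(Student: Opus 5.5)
Since each $C_3$-fiber has three entries in $\{0,\pm1\}$ and the hypothesis says the fiber sum $b_g$ lies in $3\Z$, the plan is to pin down each fiber exactly, then apply Lemma~\ref{lem:kernel-character} with a nonprincipal character, and finally reduce modulo $\lambda$.

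\textbf{Fibers.} Write the fiber over $g$ as $(a_{g,1},a_{g,Y},a_{g,Y^2})$. Its sum is $b_g=3c_g$ with $c_g\in\{0,\pm1\}$.
\begin{itemize}
\item If $c_g=\pm1$, the fiber is forced to be $\pm(1,1,1)$.
\item If $c_g=0$, the fiber has sum zero. Its entries are then either all $0$, or one $+1$, one $-1$ and one $0$ in some order.
\end{itemize}

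\textbf{Divisibility.} Let $k=\wt(C)$ and let $\ell$ be the number of nonzero fibers with $c_g=0$. Counting the nonzero entries of $A$ gives $n=3k+2\ell$. By \eqref{eq:sum-energy} applied to $B$, we get $n=\sum b_g^2=9k$. Hence $\ell=3k$, $n=9k$ and $9\mid n$.

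\textbf{The element $D$.} Apply the character $\chi(Y)=\omega$, so the fiber over $g$ contributes $d_g=a_{g,1}+a_{g,Y}\omega+a_{g,Y^2}\omega^2$.
\begin{itemize}
\item A constant fiber $\pm(1,1,1)$ gives $d_g=0$, since $1+\omega+\omega^2=0$.
\item A zero fiber gives $d_g=0$.
\item A fiber with one $+1$ and one $-1$ gives $d_g=\omega^i-\omega^j$ with $i\ne j$, which is a unit times $1-\omega=\lambda$.
\end{itemize}
By Lemma~\ref{lem:kernel-character}, $A_\chi A_\chi^{(-1)}=n$. The nonzero coefficients of $A_\chi$ are exactly $\ell=3k=n/3$ elements, each an Eisenstein unit times $\lambda$. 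Hence $A_\chi=\lambda D$, where every nonzero coefficient of $D$ is a unit and $D$ has $n/3$ of them. Because $\lambda\overline\lambda=3$, we get $3DD^{(-1)}=n$. Since $\Z[\omega][G]$ is torsion-free, this gives $DD^{(-1)}=n/3$, which establishes items 1--3.

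\textbf{Reduction modulo $\lambda$.} Reduce $D$ coefficientwise modulo $\lambda$ to get $R\in\F_3[G]$.
\begin{itemize}
\item Every unit $\pm\omega^j$ is congruent to $\pm1$ modulo $\lambda$, since $\omega\equiv1$. Every unit is therefore nonzero in $\F_3$, so $\wt(R)=n/3$.
\item Reduction is a ring homomorphism $\Z[\omega][G]\to\F_3[G]$. It is compatible with the involution, because complex conjugation sends $\omega\mapsto\omega^2$, and both reduce to $1$. Hence it sends $D^{(-1)}$ to $R^{(-1)}$.
\item This gives $RR^{(-1)}\equiv n/3=3k\equiv0$, so $RR^{(-1)}=0$.
\end{itemize}

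\textbf{Main obstacle.} The only step needing care is checking that conjugation commutes with reduction modulo $\lambda$, and this holds because $\overline\lambda=-\omega^2\lambda$ generates the same ideal. The rest is the bookkeeping of the fiber case analysis.
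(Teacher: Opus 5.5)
Your proof is correct and follows essentially the same route as the paper: you classify the $C_3$-fibers, count nonzero entries to get $n=9k$ and exactly $n/3$ nonzero zero-sum fibers, apply $Y\mapsto\omega$ to factor out $\lambda=1-\omega$, and reduce modulo $\lambda$. Your explicit check that conjugation preserves the ideal $(\lambda)$, via $\overline{\lambda}=-\omega^2\lambda$, justifies the compatibility of the reduction with the involution a little more carefully than the paper's proof does.
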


\begin{proof}
Write
\[
 A=\sum_{g\in G}g
   \left(a_{g,0}+a_{g,1}Y+a_{g,2}Y^2\right),
 \qquad a_{g,r}\in\{0,\pm1\}.
\]
For $g\in G$, the triple
$(a_{g,0},a_{g,1},a_{g,2})$ is the $C_3$-fiber over $g$, and
\[
        s_g=a_{g,0}+a_{g,1}+a_{g,2}
\]
is its \emph{principal sum}.  Thus $s_g$ is the coefficient of $g$ in the
image of $A$ under the principal character.  A fiber with principal sum
zero will be called a \emph{zero-sum fiber}.

Since character evaluation is a ring homomorphism, $BB^{(-1)}=n$.  The
equality $B=3C$ gives
\[
        CC^{(-1)}=\frac n9.
\]
If $C=\sum_{g\in G}c_gg$, then the coefficient of the identity in
$CC^{(-1)}$ is $\sum_g c_g^2$.  Hence $9\mid n$, and, since
$c_g\in\{0,\pm1\}$, exactly $n/9$ coefficients of $C$ are nonzero.  It
follows that exactly $n/9$ fibers have principal sum $\pm3$.  Such a fiber
is necessarily $(1,1,1)$ or $(-1,-1,-1)$, and these fibers account for
$n/3$ nonzero coefficients of $A$.

The coefficient of the identity in $AA^{(-1)}$ shows that $A$ has exactly
$n$ nonzero coefficients.  The remaining $2n/3$ nonzero coefficients occur
in zero-sum fibers.  A nonzero zero-sum fiber is a permutation of
$(1,-1,0)$ and therefore contains exactly two nonzero entries.  Thus there
are exactly $n/3$ nonzero zero-sum fibers.

Now apply the nonprincipal character $\chi(Y)=\omega$, and write its image as
\[
 E=\sum_{g\in G}
   \left(a_{g,0}+a_{g,1}\omega+a_{g,2}\omega^2\right)g
 \in\Z[\omega][G].
\]
By Lemma~\ref{lem:kernel-character}, $EE^{(-1)}=n$.  A fiber with principal
sum $\pm3$ maps to zero because $1+\omega+\omega^2=0$.  A nonzero zero-sum
fiber maps to a nonzero difference $\omega^r-\omega^s$, where
$r\ne s$, and every such difference has the form
\[
        (1-\omega)u,
        \qquad
        u\in\{\pm1,\pm\omega,\pm\omega^2\}.
\]
It follows that
\[
        E=(1-\omega)D,
\]
where the nonzero coefficients of $D$ are Eisenstein units.  Moreover, a
coefficient of $D$ is nonzero precisely when the corresponding fiber is a
nonzero zero-sum fiber, so $D$ has exactly $n/3$ nonzero coefficients.
Since the involution sends $\omega$ to $\omega^{-1}$,
\[
 n=EE^{(-1)}
  =(1-\omega)(1-\omega^{-1})DD^{(-1)}
  =3DD^{(-1)}.
\]
Therefore $DD^{(-1)}=n/3$.

Finally, $\omega\equiv1\pmod{1-\omega}$, so every Eisenstein unit reduces to
$\pm1$ and remains nonzero.  Thus reduction preserves the support of $D$.
Since $9\mid n$, the scalar $n/3$ is zero in $\F_3$, and reducing
$DD^{(-1)}=n/3$ modulo $1-\omega$ gives $RR^{(-1)}=0$.
\end{proof}

\subsection{\texorpdfstring{The character $Y\mapsto-1$ on a $C_4$ contraction kernel}{The character Y maps to -1 on a C4 contraction kernel}}

The next argument applies the real-valued nonprincipal character
$Y\mapsto-1$ to the $C_4$ factor that is the kernel of the contraction.

\begin{lemma}\label{lem:C4-character}
Let
\[
 A=\sum_{j\in\Z_m}\sum_{r=0}^3 a_{j,r}X^jY^r
 \in\Z[C_m\times C_4]
\]
have coefficients in $\{0,\pm1\}$ and satisfy $AA^{(-1)}=n$.  Put
\[
 b_j=a_{j,0}+a_{j,1}+a_{j,2}+a_{j,3}
\]
and
\[
 f_j=a_{j,0}-a_{j,1}+a_{j,2}-a_{j,3}.
\]
Then
\[
 F=\sum_{j\in\Z_m}f_jX^j
\]
satisfies $FF^{(-1)}=n$ and $|f_j|\le4$.  Moreover, if $b_j=\pm3$, then
$|f_j|=1$.
\end{lemma}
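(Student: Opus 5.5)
The plan is to obtain all three assertions directly from Lemma~\ref{lem:kernel-character} and from the fact that each fiber entry lies in $\{0,\pm1\}$.

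For the group-ring equation, take $H=C_4=\langle Y\rangle$ and the character $\chi(Y)=-1$. This character is real-valued, so it is a genuine character of $C_4$ and $\chi(Y^r)=(-1)^r$. By construction, $A_\chi=F$, because the coefficient of $X^j$ in $A_\chi$ is
\[
\sum_{r=0}^{3}a_{j,r}(-1)^r=f_j .
\]
Lemma~\ref{lem:kernel-character} therefore gives $FF^{(-1)}=n$ at once. Since $\chi$ takes values in $\{\pm1\}$, the coefficients of $F$ are integers and $F\in\Z[C_m]$. The bound $|f_j|\le4$ is the triangle inequality applied to four terms of absolute value at most $1$.

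For the last assertion, suppose $b_j=\pm3$. After replacing the fiber by its negative, we may assume $b_j=3$; this changes only the sign of $f_j$. Four entries from $\{0,\pm1\}$ can sum to $3$ only if three of them equal $1$ and the remaining one is $0$. A $-1$ is impossible here: with a $-1$ present, the other three entries would have to sum to $4$, which exceeds their maximum of $3$. Hence exactly one position $r_0\in\{0,1,2,3\}$ carries a $0$, and
\[
f_j=\sum_{r\ne r_0}(-1)^r .
\]
The full alternating sum $\sum_{r=0}^{3}(-1)^r$ over all four positions is $0$. Removing the term for $r_0$ leaves
\[
f_j=-(-1)^{r_0}=\pm1 ,
\]
so $|f_j|=1$.

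None of these steps is a real obstacle; the argument is routine. The only point needing care is the observation that a $-1$ cannot occur in a fiber with sum $\pm3$, since it is this that pins down the fiber shape and hence the value of $f_j$.
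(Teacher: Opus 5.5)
Your proposal is correct and follows the paper's proof: the character $Y\mapsto-1$ with Lemma~\ref{lem:kernel-character} gives $FF^{(-1)}=n$, and a fiber with sum $\pm3$ must consist of three equal nonzero entries and one zero, so its alternating sum is $\pm1$. You also spell out why no $-1$ can occur in such a fiber and give the triangle-inequality argument for $|f_j|\le4$, both of which the paper leaves implicit.
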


\begin{proof}
The first assertion follows from Lemma~\ref{lem:kernel-character} by applying
the character $Y\mapsto-1$.  If a four-term fiber has sum $3$, it consists
of three $+1$ entries and one zero; if its sum is $-3$, it consists of three
$-1$ entries and one zero.  In either case its alternating character sum is
$\pm1$.
\end{proof}

\section{Weight 36: obstructing lifts from the contraction to order 35}

The two weight-$36$ parameters share the same contracted matrix of order
$35$, but the obstructions to lifting it are different.  For order $105$ we
apply a nonprincipal character of the $C_3$ kernel and then reduce to a
ternary cyclic code.  For order $140$ we apply the real-valued character
$Y\mapsto-1$ of the $C_4$ kernel and obtain a contradiction from the four
fibers supporting the contraction.

\subsection{\texorpdfstring{The contracted matrices of order $35$ and weight $36$}{The contracted matrices of order 35 and weight 36}}

Both $\CW(105,36)$ and $\CW(140,36)$ contract naturally to order $35$.
Write $C_{35}=\langle X\rangle$.  The following exact classification
will be used twice.

\begin{proposition}\label{prop:icw35}
Let $d\in\{3,4\}$.  Every $\ICW_d(35,36)$ of augmentation $6$ is equivalent
to
\begin{equation}\label{eq:B35}
        B=-3+3X^5+3X^{10}+3X^{20}.
\end{equation}
In particular, every nonzero coefficient of such an integer circulant
weighing matrix has absolute value $3$.
\end{proposition}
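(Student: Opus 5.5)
We want to classify every $B=\sum_j b_jX^j\in\Z[C_{35}]$ with $BB^{(-1)}=36$, $|b_j|\le d\le 4$, and $\epsilon(B)=6$. By \eqref{eq:sum-energy}, such $B$ satisfies $\sum b_j=6$ and $\sum b_j^2=36$.

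\textbf{Multiplier normalization.} Since $\gcd(35,36)=1$, Result~\ref{res:contracted-multiplier} applies with $n=2^2\cdot3^2$. We need an integer $t$ that is simultaneously a power of $2$ and a power of $3$ modulo $35$. The order of $2$ modulo $35$ is $12$, and so is the order of $3$. We check which elements lie in both cyclic subgroups $\langle2\rangle$ and $\langle3\rangle$ of $(\Z/35)^\times$:
\begin{itemize}
\item $2^6=64\equiv29$ and $3^6=729\equiv29$, so $29$ lies in both.
\item We also expect $2^4=16$ and $3^{4}=81\equiv11$; checking the intersection further, $\langle 16\rangle=\{1,16,11\}$ and $\langle 11\rangle=\{1,11,16\}$ coincide.
\end{itemize}
So the intersection contains $\{1,16,11,29,\dots\}$, which should be the subgroup of order $6$ generated by $16\cdot29$. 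Taking $t$ as a generator of this subgroup, a translate of $B$ is fixed by $t$. Its support is then a union of orbits of $\langle t\rangle$ acting on $\Z_{35}$ by multiplication, and $B$ is constant on each orbit.

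\textbf{Orbit enumeration.} Next we compute these orbits: the orbit $\{0\}$, orbits inside the subgroup $7\Z_{35}\cong\Z_5$, inside $5\Z_{35}\cong\Z_7$, and among the units. The energy condition $\sum b_j^2=36$ with $|b_j|\le4$ forces few, short orbits. Since unit orbits have length at least $6$, a unit orbit can only carry coefficient $\pm1$ or $\pm2$, which severely limits the combinations. We then write $B$ as $\sum c_O\,O$ over orbit sums $O$ with integer $c_O$. We impose sum $6$ and energy $36$, and then test the vanishing of the nontrivial correlations $\corr_t(B)$ from \eqref{eq:corr}. It is cleaner to test this via the characters of $C_5$ and $C_7$: we need $|\chi(B)|^2=36$ for every character $\chi$. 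The small set of candidates can be checked by hand or by a short exact search. We expect only $B=-3+3(X^5+X^{10}+X^{20})$ to survive, possibly together with images under automorphisms. Here $\{5,10,20\}$ is an orbit of $2$ in $5\Z_{35}$, and $1+X^5+X^{10}+X^{20}$ relates to the Paley-type difference set in $C_7$. We verify that this $B$ works: its projection to $C_7$ is $3(-1+Y+Y^2+Y^4)$, which has the right norms. Finally, the other solutions (for instance with $\{15,30,25\}$ in place of $\{5,10,20\}$, or the negative with augmentation $-6$) are reduced to \eqref{eq:B35} by the equivalences, namely translations, automorphisms $X\mapsto X^a$, and sign.

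\textbf{Main obstacle.} The main obstacle is making the orbit enumeration exhaustive and correctly handling the translate normalization. Result~\ref{res:contracted-multiplier} only gives a translate fixed by $t$, and the final equivalence check must account for this. The cases $d=3$ and $d=4$ differ only in the permitted coefficient $\pm4$. I would first show that $\pm4$ cannot occur: $16$ plus the remaining energy must be realized by orbit lengths consistent with sum $6$, and we check that no character condition then holds. Only afterwards would I carry out the full $|b_j|\le3$ enumeration.
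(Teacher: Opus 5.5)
Your plan is essentially the paper's proof. You apply McFarland's theorem with $t$ generating $\langle 2\rangle\cap\langle 3\rangle=\langle 4\rangle$ in $(\Z/35)^\times$; your $t=9=4^5$ gives exactly the paper's multiplication-by-$4$ orbits, of sizes $1,6,6,6,6,3,3,2,2$. You then enumerate the orbit-constant vectors with sum $6$ and energy $36$ and filter them by the vanishing correlations.

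The one thing you leave as an expectation rather than a result is the exhaustive search itself. The paper carries it out exactly: $1{,}434$ and $1{,}600$ sum/energy vectors for $d=3$ and $d=4$, leaving exactly the two solutions supported on $\{0\}\cup\{5,10,20\}$ and $\{0\}\cup\{15,25,30\}$, which are interchanged by $X\mapsto X^3$. Because that enumeration already runs with coefficient bound $4$, your planned separate exclusion of $\pm4$ is not needed.
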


\begin{proof}
Since
\[
        4\equiv2^2\equiv3^{10}\pmod{35},
\]
Result~\ref{res:contracted-multiplier} gives a translate, still denoted by
$B$, for which $B^{(4)}=B$.

Let $O_0,\ldots,O_8$ be the multiplication-by-$4$ orbits on
$\Z_{35}$, listed by the representatives
\begin{equation}\label{eq:35-reps}
        0,1,2,3,5,6,7,14,15.
\end{equation}
Put $w_i=|O_i|$.  In the order of the representatives in
\eqref{eq:35-reps}, the orbit sizes are
\begin{equation}\label{eq:35-sizes}
        (w_0,\ldots,w_8)=(1,6,6,6,3,6,2,2,3).
\end{equation}

Let $x_i$ be the common coefficient of $B$ on $O_i$.  The augmentation
condition and the coefficient of the identity in $BB^{(-1)}=36$ give the
bounded integer vectors satisfying
\begin{equation}\label{eq:35-sum-energy}
        \sum_{i=0}^8w_ix_i=6,
        \qquad
        \sum_{i=0}^8w_ix_i^2=36.
\end{equation}
We first enumerate the vectors satisfying these two equations and then
impose the nonzero correlation equations $\corr_t(B)=0$.  A \emph{full
solution} is a vector satisfying all these correlations.  The exact counts
are as follows.

\begin{center}
\begin{tabular}{@{}crr@{}}
\toprule
Coefficient bound & Vectors satisfying \eqref{eq:35-sum-energy}
                  & Full solutions\\
\midrule
$3$ & $1{,}434$ & $2$\\
$4$ & $1{,}600$ & $2$\\
\bottomrule
\end{tabular}
\end{center}

With coordinates indexed by the orbit representatives in the order
given in \eqref{eq:35-reps}, the two full solutions are
\begin{equation}\label{eq:35-two}
\begin{split}
   &(-3,0,0,0,0,0,0,0,3),\\
   &(-3,0,0,0,3,0,0,0,0).
\end{split}
\end{equation}
They are interchanged by the automorphism $X\mapsto X^3$.  The second vector
in \eqref{eq:35-two} is precisely \eqref{eq:B35}.  The accompanying program
\texttt{verify\_lifting.cpp} reproduces the orbit data, all counts, and both
solutions using exact integer arithmetic.
\end{proof}

Notice that
\[
        B=3C,
        \qquad
        C=-1+X^5+X^{10}+X^{20},
        \qquad
        CC^{(-1)}=4.
\]
Thus the contracted matrix is three times a $\CW(35,4)$.

\subsection{The Eisenstein and ternary-code obstruction for
\texorpdfstring{$\CW(105,36)$}{CW(105,36)}}

The contraction alone does not determine whether the unique equivalence
class of $\ICW_3(35,36)$ lifts to order $105$.  The $C_3$-character reduction from
Theorem~\ref{thm:C3-lift} supplies the additional obstruction.

\begin{theorem}\label{thm:105}
There is no $\CW(105,36)$.
\end{theorem}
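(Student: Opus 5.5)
Suppose a $\CW(105,36)$ exists, represented by $A\in\Z[C_{105}]$ with $\epsilon(A)=6$. Since $\gcd(3,35)=1$, identify $C_{105}\cong C_{35}\times C_3$ with $C_3=\langle Y\rangle$.

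\textbf{Step 1: reduce to Theorem~\ref{thm:C3-lift}.} Contracting along the $C_3$ factor gives an $\ICW_3(35,36)$ of augmentation $6$. By Proposition~\ref{prop:icw35}, after an equivalence of $A$ this contraction is $B=3C$ with $C=-1+X^5+X^{10}+X^{20}$. An equivalence consists of a translation, an automorphism $X\mapsto X^a$, or a sign change. Each of these lifts to an equivalence of $A$ on $C_{105}$, using an automorphism of $C_{105}$ that is the identity on the $C_3$ factor. So we may assume the contraction is exactly $3C$.

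Theorem~\ref{thm:C3-lift} then applies with $n=36$. It yields $D\in\Z[\omega][C_{35}]$ with exactly $12$ nonzero coefficients, all Eisenstein units, satisfying $DD^{(-1)}=12$. Its reduction $R\in\F_3[C_{35}]$ has weight $12$ and satisfies $RR^{(-1)}=0$.

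\textbf{Step 2: use the support of $D$.} The proof of Theorem~\ref{thm:C3-lift} gives more than the statement records. The support of $D$ is exactly the set of nonzero zero-sum fibers, so it is disjoint from $\operatorname{supp}(C)=\{0,5,10,20\}$. Its size is $12=36/3$. This support condition, together with weight $12$, will cut the search down.

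\textbf{Step 3: enumerate the ternary candidates.} The condition $RR^{(-1)}=0$ says that all cyclic shifts of $R$ are pairwise orthogonal over $\F_3$. Equivalently, the cyclic code generated by $R$ is self-orthogonal. The idempotent and cyclotomic-coset structure of $\F_3[C_{35}]$ restricts which cyclic codes can be self-orthogonal: the code $\langle R\rangle$ must lie in its own dual, which constrains its set of defining cosets under multiplication by $3$ modulo $35$. I would first list the self-orthogonal cyclic ternary codes of length $35$. I would then enumerate their weight-$12$ words whose support avoids $\{0,5,10,20\}$, using the symmetries fixing $C$ to reduce the work.

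\textbf{Step 4: rule out every lift.} For each candidate $R$, search over all Eisenstein-unit lifts $D$ with $D\equiv R\pmod{\lambda}$. Each entry $\pm1$ of $R$ lifts to one of $\pm1,\pm\omega,\pm\omega^2$, giving $3$ choices per position. Test the exact equation $DD^{(-1)}=12$, meaning all nonzero-shift correlations vanish in $\Z[\omega]$. Branch-and-bound over partial assignments, checking correlations as they become determined, keeps this finite search manageable.

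If no lift survives, the theorem follows. I expect Step 3, bounding the number of weight-$12$ codewords, to be the main obstacle, since a naive search over $3^{12}$ lifts times all candidate supports is large. If needed, I would also use the multiplier: $D^{(t)}=\eta gD$ from Result~\ref{res:generalized-multiplier}, applied with a suitable $t$ such as $4$. This forces orbit structure on $D$ and collapses the search.
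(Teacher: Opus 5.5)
Your plan is correct and is essentially the paper's proof. You normalize the contraction to $3C$, apply Theorem~\ref{thm:C3-lift}, observe that the only nonzero self-orthogonal cyclic ternary codes of length $35$ are the two reciprocal $[35,12]$ codes (the $X-1$, $\Phi_5$, $\Phi_7$ components must vanish), and exhaust the Eisenstein-unit lifts of their weight-$12$ words; the paper carries this out ($420$ words in six translation-and-sign classes, $3^{11}$ normalized phases each, none surviving), while your extra condition that $\operatorname{supp}(D)$ avoids $\{0,5,10,20\}$ is valid but unnecessary, and Step~3 is not a real obstacle since each code has only $3^{12}$ words.
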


\begin{proof}
Suppose that $A$ is a $\CW(105,36)$.  Identify
\[
        C_{105}\cong C_{35}\times C_3,
        \qquad C_3=\langle Y\rangle.
\]
Applying the principal character $Y\mapsto1$ gives an
$\ICW_3(35,36)$.  By Proposition~\ref{prop:icw35}, this image is equivalent
to $B=3C$, where
\[
        C=-1+X^5+X^{10}+X^{20}.
\]
The translations and the automorphism used in
Proposition~\ref{prop:icw35} may also be applied to $A$.  A translation in
$C_{35}$ has a preimage in $C_{105}$.  The 
automorphism
$X\mapsto X^3$ of $C_{35}$ is induced under the above direct-product
identification by
\[
        (X,Y)\longmapsto(X^3,Y).
\]
Equivalently, if $C_{105}=\langle Z\rangle$, this is the automorphism
$Z\mapsto Z^{73}$, since
\[
        73\equiv3\pmod{35},\qquad 73\equiv1\pmod3,
        \qquad \gcd(73,105)=1.
\]
Thus we may replace $A$ by an equivalent matrix and assume that its image
under the principal character is exactly $B=3C$.

Theorem~\ref{thm:C3-lift} now gives an element
$D\in\Z[\omega][C_{35}]$ with exactly $12$ nonzero coefficients, all
of which are Eisenstein units, and
\begin{equation}\label{eq:D12}
        DD^{(-1)}=12.
\end{equation}
Its reduction $R$ modulo $1-\omega$ has Hamming weight $12$ and satisfies
\begin{equation}\label{eq:Rselforth}
        RR^{(-1)}=0
        \quad\text{in }\F_3[C_{35}].
\end{equation}

We determine all possible $R$.  For a monic polynomial $h$ with nonzero
constant term, write
\[
        h^*(X)=h(0)^{-1}X^{\deg h}h(X^{-1})
\]
for its reciprocal polynomial.  Over $\F_3$,
\begin{equation}\label{eq:factor35}
 X^{35}-1=(X-1)\Phi_5(X)\Phi_7(X)f(X)f^*(X),
\end{equation}
where
\begin{align*}
 f(X)={}&X^{12}+X^{10}-X^8+X^7+X^5-X^4+X^3-X^2-X+1,\\
 f^*(X)={}&X^{12}-X^{11}-X^{10}+X^9-X^8+X^7+X^5-X^4+X^2+1.
\end{align*}
The polynomials $\Phi_5$ and $\Phi_7$ are irreducible and self-reciprocal over
$\F_3$, since $\operatorname{ord}_5(3)=4$ and
$\operatorname{ord}_7(3)=6$, with a power of $3$ equal to $-1$ modulo each
of $5$ and $7$.  The two polynomials $f$ and $f^*$ are reciprocal
irreducible factors of degree $12$: indeed,
$\operatorname{ord}_{35}(3)=12$, and no power of $3$ is $-1$ modulo $35$.

Identify
\[
        \F_3[C_{35}]\cong\F_3[X]/(X^{35}-1),
\]
and write
\[
        r(X)=\sum_{j=0}^{34}r_jX^j
\]
for the unique polynomial of degree less than $35$ representing $R$.  Let
\[
        \tau(r)(X)=r(X^{-1})\pmod{X^{35}-1};
\]
this is the polynomial form of the involution $R\mapsto R^{(-1)}$.  Hence
\eqref{eq:Rselforth} is equivalent to
\[
        r\tau(r)=0
        \quad\text{in }\F_3[X]/(X^{35}-1).
\]
Since $3\nmid35$, the factorization in \eqref{eq:factor35} is square-free,
and the Chinese remainder theorem identifies this quotient with the direct
product of the five corresponding finite fields.  On a component associated
with a self-reciprocal factor, $\tau$ is a field automorphism; the equation
$r\tau(r)=0$ therefore forces that component of $r$ to be zero.  Hence
$X-1$, $\Phi_5$, and $\Phi_7$ divide $r$.

The involution interchanges the two components associated with $f$ and
$f^*$.  If both components of $r$ were nonzero, the corresponding components
of $r\tau(r)$ would also be nonzero.  Thus at least one of $f$ and $f^*$
divides $r$.  Both cannot divide $r$, since together with the three
self-reciprocal factors this would imply $X^{35}-1\mid r$, and hence $r=0$,
contrary to $\wt(R)=12$.  Therefore $R$ lies in one of the two reciprocal
cyclic codes over $\F_3=\{0,\pm1\}$ generated by
\[
        g(X)=(X-1)\Phi_5(X)\Phi_7(X)f(X)
\]
and $g^*(X)$, respectively.  These codes are ternary because their
coefficient field is $\F_3$.  Since $\deg g=23$, each has parameters
$[35,12]$.

The involution $R\mapsto R^{(-1)}$ interchanges the two codes.  It also sends
any Eisenstein-unit lift satisfying \eqref{eq:D12} to another such lift.
It is therefore enough to enumerate the code generated by $g$.  Enumerating
all $3^{12}=531{,}441$ codewords gives the exact weight distribution
\begin{center}
\begin{tabular}{@{}cr@{}}
\toprule
Weight & Number of codewords\\
\midrule
$0$  & $1$\\
$12$ & $420$\\
$15$ & $2{,}520$\\
$18$ & $37{,}590$\\
$21$ & $158{,}550$\\
$24$ & $218{,}610$\\
$27$ & $102{,}620$\\
$30$ & $11{,}130$\\
\bottomrule
\end{tabular}
\end{center}
The entries sum to $3^{12}$.  The $420$ words of weight $12$ form six
classes under cyclic translation and multiplication by $-1$, each of size
$70$; representatives are listed in Appendix~\ref{app:ternary-classes}.
Translation and multiplication by $-1$ preserve the existence of a lift, so
one representative from each class suffices.

For a fixed signed ternary word, each nonzero coefficient has exactly three
Eisenstein-unit lifts with the prescribed reduction modulo $1-\omega$.
A choice of one of these three lifts at each nonzero position will be called
a \emph{phase assignment}.  Multiplication of all coefficients by a common
power of $\omega$ preserves both the ternary reduction and
\eqref{eq:D12}; normalizing one nonzero coefficient therefore leaves
$3^{11}=177{,}147$ phase assignments for each class.  The exact
program tests all
\[
        6\cdot3^{11}=1{,}062{,}882
\]
normalized assignments.  For a sequence of length $35$, the correlation at
shift $35-h$ is the complex conjugate of the correlation at shift $h$;
therefore it is enough to test shifts $1\le h\le17$.  In the order of the
six representatives in Appendix~\ref{app:ternary-classes}, the final
filtering stages are
\begin{center}
\begin{tabular}{@{}crrc@{}}
\toprule
Class & Last shift with survivors & Survivors & Eliminating shift\\
\midrule
$1$ & $6$ & $6$   & $7$\\
$2$ & $6$ & $45$  & $7$\\
$3$ & $4$ & $324$ & $5$\\
$4$ & $8$ & $2$   & $9$\\
$5$ & $5$ & $72$  & $6$\\
$6$ & $8$ & $2$   & $9$\\
\bottomrule
\end{tabular}
\end{center}
Thus no assignment has all nonzero correlations equal to zero.  The complete
stage-by-stage counts are included in the recorded output.  Hence no $D$
satisfying \eqref{eq:D12} exists, contradicting
Theorem~\ref{thm:C3-lift}.  Therefore no $\CW(105,36)$ exists.
\end{proof}

\subsection{The real-character obstruction for
\texorpdfstring{$\CW(140,36)$}{CW(140,36)}}

Here the same contracted class is tested by the real-valued nonprincipal
character $Y\mapsto-1$ of the $C_4$ factor in
$C_{140}\cong C_{35}\times C_4$.

\begin{theorem}\label{thm:140}
There is no $\CW(140,36)$.
\end{theorem}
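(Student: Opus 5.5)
The plan is to combine Lemma~\ref{lem:C4-character} with Proposition~\ref{prop:icw35}. The key point is that the image $F$ under the character $Y\mapsto-1$ is itself an $\ICW_4(35,36)$, so the classification constrains $F$ as well as the principal contraction $B$.

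Suppose $A$ is a $\CW(140,36)$. Since $\gcd(4,35)=1$, identify $C_{140}\cong C_{35}\times C_4$ with $C_4=\langle Y\rangle$, and write
\[
A=\sum_{j}\sum_{r=0}^3 a_{j,r}X^jY^r.
\]
The principal character gives $B=\sum_j b_jX^j$, an $\ICW_4(35,36)$. Its augmentation is $\pm6$, so after replacing $A$ by $-A$ it is $6$. By Proposition~\ref{prop:icw35}, every nonzero $b_j$ equals $\pm3$. The energy condition \eqref{eq:sum-energy} then forces exactly four indices $j$ with $b_j=\pm3$, and $b_j=0$ elsewhere. I do not need to normalize $B$ to \eqref{eq:B35} exactly; only this coefficient pattern matters.

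Next, apply Lemma~\ref{lem:C4-character}. The element $F=\sum_j f_jX^j$ satisfies $FF^{(-1)}=36$ and $|f_j|\le4$, so $F$ is also an $\ICW_4(35,36)$. Taking augmentations in $FF^{(-1)}=36$ gives $\epsilon(F)^2=36$, hence $\epsilon(F)=\pm6$. Replacing $F$ by $-F$ if necessary, which preserves both the equation and the absolute values of the coefficients, we may assume $\epsilon(F)=6$. Proposition~\ref{prop:icw35} then says $F$ is equivalent to $-3+3X^5+3X^{10}+3X^{20}$. Equivalence (translation, $X\mapsto X^a$, sign) only permutes coefficients and possibly changes all their signs. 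Therefore every nonzero coefficient of $F$ has absolute value $3$.

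On the other hand, Lemma~\ref{lem:C4-character} gives $|f_j|=1$ at each of the four indices where $b_j=\pm3$. This contradicts the conclusion that every nonzero coefficient of $F$ has absolute value $3$, so no $\CW(140,36)$ exists.

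As a consistency check, for a zero-sum fiber we have $a_{j,0}+a_{j,2}=-(a_{j,1}+a_{j,3})$, so $f_j=2(a_{j,0}+a_{j,2})$ is even. Hence the odd coefficients of $F$ are exactly the four with $|f_j|=1$, which matches the argument above.

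The main point requiring care is verifying that Proposition~\ref{prop:icw35} really applies to $F$. This needs three things. First, the coefficient bound $|f_j|\le4$ must match the case $d=4$ of the proposition. Second, the augmentation must be normalized to $+6$; replacing $F$ by $-F$ is harmless since the proposition's conclusion concerns absolute values. Third, the conclusion about absolute values must be invariant under the equivalences used in the proposition, which holds since they only permute coefficients and possibly negate all of them. Beyond this, no new computation is needed, since the exact enumeration behind Proposition~\ref{prop:icw35} already covers coefficient bound $4$.
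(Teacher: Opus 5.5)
Your proof is correct and is essentially the paper's argument. The principal contraction has exactly four coefficients $\pm3$ by Proposition~\ref{prop:icw35}. The $Y\mapsto-1$ image $F$ is, after a sign normalization, again an $\ICW_4(35,36)$, so its nonzero coefficients must also have absolute value $3$, which contradicts $|f_j|=1$ at those four positions from Lemma~\ref{lem:C4-character}. Your parity remark about zero-sum fibers is a correct extra but is not needed for the contradiction.
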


\begin{proof}
Suppose that $A$ is a $\CW(140,36)$ and identify
$C_{140}\cong C_{35}\times C_4$.  Its image under the principal
character of the $C_4$ factor is an $\ICW_4(35,36)$.  By
Proposition~\ref{prop:icw35}, this element has four nonzero coefficients,
each of absolute value $3$.

Apply the character $C_4\to\{\pm1\}$ from
Lemma~\ref{lem:C4-character}.  Its image $F\in\Z[C_{35}]$ satisfies
$FF^{(-1)}=36$ and has coefficient bound $4$.  Its augmentation is $\pm6$;
after replacing $F$ by $-F$ if necessary, it is an $\ICW_4(35,36)$ of
augmentation $6$.  Proposition~\ref{prop:icw35} therefore says that every
nonzero coefficient of $F$ has absolute value $3$.

On the other hand, at each of the four positions where the
principal-character image has coefficient $\pm3$,
Lemma~\ref{lem:C4-character} gives a coefficient of $F$ of absolute value
$1$.  This contradiction proves the theorem.
\end{proof}

\section{\texorpdfstring{Weight 64: character reduction over $\Z[i]$ and generalized multipliers}{Weight 64: character reduction over Z[i] and generalized multipliers}}

For the weight-$64$ parameters, we first apply the faithful character
$Y\mapsto i$ to the $C_4$ factor that is the kernel of the contraction
$C_{4m}\to C_m$.  This gives an element of $\Z[i][C_m]$, to which the
generalized multiplier theorem is then applied.  Here $\Z[i]$ is the ring of
Gaussian integers, whose units are $\{\pm1,\pm i\}$.  The following
cyclotomic decomposition fact supplies the prime-ideal hypothesis required
by the multiplier theorem.

\begin{lemma}\label{lem:prime2}
Let $m$ be odd, and choose $t$ such that
\[
        t\equiv1\pmod4,
        \qquad
        t\equiv2\pmod m.
\]
Then $\gcd(t,4m)=1$, and the automorphism $\sigma_t$ of
$\Q(\zeta_{4m})$ fixes every prime ideal above $2$.
\end{lemma}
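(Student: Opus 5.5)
We will prove the lemma through the standard description of Frobenius and decomposition groups in cyclotomic fields. First, the coprimality claim. Since $t\equiv1\pmod 4$, $t$ is odd. Since $t\equiv2\pmod m$ and $m$ is odd, $\gcd(t,m)=\gcd(2,m)=1$. Because $m$ is odd, $\gcd(4,m)=1$, so the Chinese remainder theorem guarantees that such a $t$ exists modulo $4m$. Combining the two coprimality statements gives $\gcd(t,4m)=1$, so $\sigma_t\in\operatorname{Gal}(\Q(\zeta_{4m})/\Q)\cong(\Z/4m\Z)^\times$ is well defined.

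For the prime ideals, write $\Q(\zeta_{4m})$ as the compositum of $K_1=\Q(\zeta_4)=\Q(i)$ and $K_2=\Q(\zeta_m)$, with the Galois group identified with $(\Z/4\Z)^\times\times(\Z/m\Z)^\times$ via $t\mapsto(t\bmod 4,t\bmod m)$. We will use the standard fact that the decomposition group of a prime above $2$ in $\Q(\zeta_{4m})$ is the subgroup generated by the inertia group $(\Z/4\Z)^\times\times\{1\}$ together with a Frobenius-type element acting as $\sigma_2$ on $\Q(\zeta_m)$. Concretely, this decomposition group is $(\Z/4\Z)^\times\times\langle 2\rangle\subseteq(\Z/4\Z)^\times\times(\Z/m\Z)^\times$. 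Since the Galois group is abelian, all primes above $2$ share this decomposition group. An automorphism fixes every prime above $2$ exactly when it lies in this group, and $\sigma_t$ corresponds to $(1,2)$, which does lie in it.

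If we prefer a more hands-on argument, we can instead compute with ideals directly. In $\Z[\zeta_m]$, $2$ is unramified and $\sigma_2$ is the Frobenius, so $\sigma_2$ fixes each prime $\mathfrak p$ above $2$ in $\Q(\zeta_m)$. In $\Q(i)$, $2=-i(1+i)^2$ is totally ramified. Hence each prime $\mathfrak P$ of $\Q(\zeta_{4m})$ above $\mathfrak p$ is the unique prime above $\mathfrak p$, because $\mathfrak p$ ramifies with index $2=[\Q(\zeta_{4m}):\Q(\zeta_m)]$. Since $\sigma_t$ restricts to $\sigma_2$ on $\Q(\zeta_m)$, it maps $\mathfrak P$ to a prime above $\sigma_2(\mathfrak p)=\mathfrak p$, which can only be $\mathfrak P$ itself.

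The main obstacle is justifying that $\mathfrak p$ is totally ramified in $\Q(\zeta_{4m})/\Q(\zeta_m)$. We handle this with multiplicativity of ramification indices: $2$ has ramification index $2$ in $\Q(i)$ and index $1$ in $\Q(\zeta_m)$, so every prime above $2$ in the compositum has ramification index at least $2$ over $\Q$. Since the relative degree over $\Q(\zeta_m)$ is $2$, the ramification index of $\mathfrak p$ is exactly $2$ and $\mathfrak p$ has a unique prime above it.
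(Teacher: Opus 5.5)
Your proposal is correct, and your main argument is essentially the paper's: identify $\operatorname{Gal}(\Q(\zeta_{4m})/\Q)$ with $(\Z/4\Z)^\times\times(\Z/m\Z)^\times$, note that the decomposition group above $2$ is $(\Z/4\Z)^\times\times\langle 2\rangle$ and is shared by all primes above $2$ because the group is abelian, and observe that $\sigma_t\leftrightarrow(1,2)$ lies in it. Your ``hands-on'' variant is also valid: it uses the total ramification of each prime $\mathfrak p\mid 2$ of $\Q(\zeta_m)$ in the quadratic extension $\Q(\zeta_{4m})/\Q(\zeta_m)$ together with $\sigma_2(\mathfrak p)=\mathfrak p$, and so proves directly the decomposition-group description that the paper cites from Washington.
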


\begin{proof}
Put $K_1=\Q(i)$, $K_2=\Q(\zeta_m)$, and
$K=K_1K_2=\Q(\zeta_{4m})$.  Since the conductors $4$ and $m$ are coprime,
$K_1\cap K_2=\Q$, and restriction gives
\[
 \operatorname{Gal}(K/\Q)
 \cong \operatorname{Gal}(K_1/\Q)\times
       \operatorname{Gal}(K_2/\Q).
\]
The prime $2$ is totally ramified in $K_1$ and unramified in $K_2$.  Thus the
decomposition group above $2$ is the whole group
$\operatorname{Gal}(K_1/\Q)$ on the first factor and is generated by the
Frobenius $\sigma_2$ on the second factor.  Equivalently, under the displayed
product identification it is
\[
 \operatorname{Gal}(K_1/\Q)\times
 \langle\sigma_2|_{K_2}\rangle;
\]
see the decomposition law for primes in cyclotomic fields
\cite[Chapter~2]{Washington1997}.  The congruences defining $t$ show that
$\sigma_t$ restricts to the identity on $K_1$ and to $\sigma_2$ on $K_2$.
Hence $\sigma_t$ lies in the decomposition group of a prime above $2$.
Since $K/\Q$ is abelian, the decomposition groups of all primes above
$2$ coincide; hence $\sigma_t$ fixes every such prime.  Finally, the same
congruences show that $t$ is odd and relatively prime to $m$, so
$\gcd(t,4m)=1$.
\end{proof}

Let $m$ be odd and identify $C_{4m}$ with $C_m\times C_4$.  Write
$C_m=\langle X\rangle$ and $C_4=\langle Y\rangle$.  The faithful
character $Y\mapsto i$ gives the following necessary condition.

\begin{theorem}\label{thm:gaussian-reduction}
If a $\CW(4m,64)$ exists, then there is an element
\[
        E=\sum_{j\in\Z_m}z_jX^j\in\Z[i][C_m]
\]
satisfying
\begin{align}
        EE^{(-1)}&=64,                                      \label{eq:gauss-norm}\\
        z_j&=a_j+ib_j,\quad a_j,b_j\in\{-2,-1,0,1,2\},     \label{eq:gauss-box}\\
        \sum_{j\in\Z_m}z_j&=8,                             \label{eq:gauss-sum}\\
        z_{2j}&=z_j \quad(j\in\Z_m).                       \label{eq:gauss-fixed}
\end{align}
\end{theorem}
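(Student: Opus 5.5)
The plan is to start from a putative $\CW(4m,64)$, apply the faithful character of the $C_4$ kernel, normalize the resulting Gaussian element, and then invoke Result~\ref{res:generalized-multiplier} with the automorphism supplied by Lemma~\ref{lem:prime2}. Let $A\in\Z[C_m\times C_4]$ represent the $\CW(4m,64)$, with coefficients $a_{j,r}\in\{0,\pm1\}$ for $j\in\Z_m$ and $0\le r\le3$.

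First apply $Y\mapsto i$ to the $C_4$ factor. By Lemma~\ref{lem:kernel-character}, the image $E_0=\sum_j z_jX^j$ satisfies $E_0E_0^{(-1)}=64$. Its coefficients are
\[
z_j=(a_{j,0}-a_{j,2})+i(a_{j,1}-a_{j,3}),
\]
so the real and imaginary parts each lie in $\{-2,\dots,2\}$. This gives \eqref{eq:gauss-norm} and \eqref{eq:gauss-box}.

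Next, take augmentations. Since $\epsilon$ is a ring homomorphism commuting with the involution, $\epsilon(E_0)\overline{\epsilon(E_0)}=64$. The only ways to write $64$ as a sum of two squares are $64+0$ and $0+64$, so $\epsilon(E_0)=8u$ with $u\in\{\pm1,\pm i\}$. Replacing $E_0$ by $u^{-1}E_0$ preserves \eqref{eq:gauss-norm}. It also preserves the box \eqref{eq:gauss-box}, because multiplication by a Gaussian unit only permutes and negates the real and imaginary parts. After this step the augmentation equals $8$, which is \eqref{eq:gauss-sum}.

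Then apply Result~\ref{res:generalized-multiplier} with $u=4$, $G=C_m$, $n=64$, and $t$ chosen as in Lemma~\ref{lem:prime2}, so that $t\equiv1\pmod 4$ and $t\equiv2\pmod m$. The hypotheses check out as follows:
\begin{itemize}
\item $\gcd(t,4m)=1$ by Lemma~\ref{lem:prime2};
\item $\gcd(64,m)=1$ because $m$ is odd;
\item $\operatorname{lcm}(4,\exp(C_m))=4m$;
\item $\sigma_t$ fixes every prime of $\Z[\zeta_{4m}]$ above $2$ by Lemma~\ref{lem:prime2}.
\end{itemize}
Hence $E_0^{(t)}=\eta gE_0$ with $\eta=\pm i^a$. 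Moreover $\gcd(t-1,m)=\gcd(1,m)=1$, since $t-1\equiv1\pmod m$. The second part of the result therefore gives a translate $E=hE_0$ with $E^{(t)}=\eta E$. A translation preserves \eqref{eq:gauss-norm}, \eqref{eq:gauss-box}, and \eqref{eq:gauss-sum}.

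The last step is to identify $\eta$ and read off \eqref{eq:gauss-fixed}; this is where the main care is needed. Because $t\equiv1\pmod 4$, $\sigma_t$ fixes $i$, so
\[
E^{(t)}=\sum_j z_jX^{tj}=\sum_j z_jX^{2j}.
\]
Taking augmentations of $E^{(t)}=\eta E$ gives $\sigma_t(8)=8=8\eta$, so $\eta=1$. Thus $\sum_j z_jX^{2j}=\sum_j z_jX^j$. Comparing coefficients of $X^{2j}$, which is legitimate since multiplication by $2$ is a bijection of $\Z_m$ for odd $m$, yields $z_{2j}=z_j$.

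The only delicate points are the order of the normalizations and the fact that they do not interfere. The unit rescaling must come before the multiplier step, since it is what forces $\eta=1$. The translation must come after it, and it leaves the augmentation and the box unchanged.
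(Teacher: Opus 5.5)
Your proof is correct and follows the paper's argument: apply $Y\mapsto i$, choose $t$ by Lemma~\ref{lem:prime2}, apply Result~\ref{res:generalized-multiplier}, translate, get $\eta=1$ from the augmentation, and read off $z_{2j}=z_j$ from $t\equiv2\pmod m$. The one difference is that the paper normalizes the augmentation to $8$ \emph{after} the multiplier step. It gets $\eta=1$ directly from $S\ne0$ and $\sigma_t(S)=S$, and a unit rotation preserves $E^{(t)}=E$. So your remark that the rescaling \emph{must} come first is not right, though your ordering is also valid.
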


\begin{proof}
Suppose
\[
 A=\sum_{j\in\Z_m}\sum_{r=0}^3a_{j,r}X^jY^r
 \in\Z[C_m\times C_4]
\]
represents a $\CW(4m,64)$.  Apply the character $Y\mapsto i$ and put
\[
 E=\sum_{j\in\Z_m}z_jX^j,
 \qquad
 z_j=(a_{j,0}-a_{j,2})+i(a_{j,1}-a_{j,3}).
\]
Lemma~\ref{lem:kernel-character} gives \eqref{eq:gauss-norm}, and the
coefficient formula gives \eqref{eq:gauss-box}.

Choose $t$ by the Chinese remainder theorem as in Lemma~\ref{lem:prime2}.
The lemma shows that $\sigma_t$ fixes every prime ideal above $2$ in
$\Z[\zeta_{4m}]$.  Since $\gcd(64,m)=1$ and $\gcd(t,4m)=1$,
Result~\ref{res:generalized-multiplier} applies to $E$.  Moreover,
$t-1\equiv1\pmod m$, so after a group translation we have
\[
        E^{(t)}=\eta E
\]
for a Gaussian unit $\eta$.

Let $S=\epsilon(E)=\sum_jz_j$.  Applying the principal character to
\eqref{eq:gauss-norm} gives $S\overline S=64$, so $S\ne0$.  Because
$t\equiv1\pmod4$, the automorphism $\sigma_t$ fixes every Gaussian
coefficient, and the augmentation of $E^{(t)}$ is again $S$.  Applying the
augmentation to $E^{(t)}=\eta E$ therefore gives $S=\eta S$, whence
$\eta=1$.  The integer solutions of $a^2+b^2=64$ are
$(\pm8,0)$ and $(0,\pm8)$, so $S$ is a Gaussian unit times $8$.  Multiplying
$E$ by the inverse unit normalizes $S$ to $8$; this rotation preserves the coefficient bounds in
\eqref{eq:gauss-box} and the equation $E^{(t)}=E$.  Finally,
$t\equiv2\pmod m$ turns $E^{(t)}=E$ into \eqref{eq:gauss-fixed}.
\end{proof}

Let $O_1,\ldots,O_s$ be the multiplication-by-$2$ orbits on $\Z_m$, with
sizes $w_1,\ldots,w_s$, and let $c_\ell=r_\ell+is_\ell$ be the common value of
$z_j$ on $O_\ell$.  Theorem~\ref{thm:gaussian-reduction} implies
\begin{align}
 \sum_{\ell=1}^s w_\ell r_\ell&=8,
 &\sum_{\ell=1}^s w_\ell s_\ell&=0,                         \label{eq:gauss-linear}\\
 \sum_{\ell=1}^s w_\ell(r_\ell^2+s_\ell^2)&=64,            \label{eq:gauss-energy}
\end{align}
where $-2\le r_\ell,s_\ell\le2$.  For a shift $h$, put
\[
        \Gamma_h(E)=\sum_{j\in\Z_m}z_{j+h}\overline{z_j}.
\]
The equation \eqref{eq:gauss-norm} requires
$\Gamma_h(E)=0$ for every nonzero shift $h\in\Z_m$.

\begin{lemma}\label{lem:gaussian-enumeration}
For $m=35,45,49$, the bounded integer systems
\eqref{eq:gauss-linear}--\eqref{eq:gauss-energy}, together with the indicated
correlation equations, have the following exact filtering counts.

\begin{center}
\small
\begin{tabular}{@{}clrl@{}}
\toprule
$m$ & Orbit sizes & Initial & Survivors after successive correlations\\
\midrule
$35$ & $1,12,12,3,4,3$ & $1{,}152$
 & $\Gamma_1:4$, $\Gamma_2:4$, $\Gamma_3:4$, $\Gamma_4:4$, $\Gamma_5:0$\\
$45$ & $1,12,4,6,12,4,2,4$ & $58{,}188$
 & $\Gamma_1:1{,}242$, $\Gamma_2:1{,}242$, $\Gamma_3:0$\\
$49$ & $1,21,21,3,3$ & $32$
 & $\Gamma_1:0$\\
\bottomrule
\end{tabular}
\end{center}
In particular, none of the three systems has a full solution, that
is, a solution satisfying all correlation equations.
\end{lemma}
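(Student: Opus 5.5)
This is a finite computation, so the plan is an exact enumeration organized so that each stage of the table can be checked independently. Fix $m\in\{35,45,49\}$. The first step is to compute the multiplication-by-$2$ orbits on $\Z_m$. For $m=35$, $\operatorname{ord}_{35}(2)=12$, and the orbit sizes are $1,12,12,3,4,3$; they come from the divisor structure $\{0\}$, units, $5\Z_{35}$ ($\operatorname{ord}_7(2)=3$), and $7\Z_{35}$ ($\operatorname{ord}_5(2)=4$). The same divisor-by-divisor count gives $1,12,4,6,12,4,2,4$ for $45$ and $1,21,21,3,3$ for $49$. These sizes should sum to $m$, which is a quick consistency check, and I would record a representative and the full member list for each orbit.

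The second step is to enumerate all vectors $(c_1,\dots,c_s)$ with $c_\ell=r_\ell+is_\ell$ and $r_\ell,s_\ell\in\{-2,\dots,2\}$ satisfying \eqref{eq:gauss-linear}--\eqref{eq:gauss-energy}. A naive search over $25^s$ vectors is feasible for $s\le 8$, but I would prune it. Process the orbits in decreasing order of size. Maintain partial sums of $\sum w_\ell r_\ell$, $\sum w_\ell s_\ell$, and $\sum w_\ell(r_\ell^2+s_\ell^2)$. Cut a branch as soon as the energy exceeds $64$, or as soon as the remaining orbits cannot correct the linear sums. For the latter test, the reachable range for $\sum w_\ell r_\ell$ is bounded by $2\sum_{\text{remaining}}w_\ell$, and each unit of $|r_\ell|$ contributes at least $w_\ell$ to the energy, so the leftover energy budget also caps the correction. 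The energy bound is very restrictive on the large orbits; for example, an orbit of size $21$ can carry only $|c_\ell|^2\le 3$. The output of this step is the column ``Initial'', namely $1{,}152$, $58{,}188$ and $32$.

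The third step is to expand each surviving vector to $z_j$ for $j\in\Z_m$. Then compute $\Gamma_h(E)=\sum_j z_{j+h}\overline{z_j}$ exactly in Gaussian integers for $h=1,2,\dots$ in succession, keeping only the vectors with $\Gamma_h=0$. Recording the number of survivors after each $h$ reproduces the table, and the process stops once the count reaches zero. Only $1\le h\le\lfloor m/2\rfloor$ is needed, since $\Gamma_{m-h}=\overline{\Gamma_h}$. An empty survivor set after any stage already proves that no full solution exists, because a full solution must satisfy every correlation equation.

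The main obstacle is not mathematical but one of reliability: the counts must be exactly right. To guard against errors, I would use only integer arithmetic, storing real and imaginary parts separately. I would also cross-check the initial counts by an independent brute-force loop for $m=49$ and $m=35$, where that loop is cheap. Finally, I would check that the survivor sets are invariant under the symmetries preserving the system, namely complex conjugation combined with $X\mapsto X^{-1}$ and the automorphisms $X\mapsto X^a$ with $a$ a unit. This invariance gives a consistency test on the intermediate counts.
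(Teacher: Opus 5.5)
Your proposal is correct and follows the paper's proof. Compute the multiplication-by-$2$ orbits, enumerate the bounded orbit vectors satisfying \eqref{eq:gauss-linear}--\eqref{eq:gauss-energy}, then filter successively by $\Gamma_1,\Gamma_2,\dots$ in exact integer arithmetic until no vector survives; your pruning and cross-checks are only added safeguards. Your symmetry check on the intermediate counts is valid here only because every unit modulo $35$, $45$, or $49$ lies in $\langle 2,-1\rangle$, so $X\mapsto X^a$ acts on orbit-constant vectors as the identity or as $X\mapsto X^{-1}$, which sends $\Gamma_h$ to $\overline{\Gamma_h}$; in general $\Gamma_h(E^{(a)})=\Gamma_{a^{-1}h}(E)$, so the intermediate survivor sets need not be invariant.
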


\begin{proof}
The multiplication-by-$2$ orbit representatives are, respectively,
\begin{align*}
 m=35:&\quad 0,1,3,5,7,15,\\
 m=45:&\quad 0,1,3,5,7,9,15,21,\\
 m=49:&\quad 0,1,3,7,21.
\end{align*}
For each modulus, enumerate all pairs
$(r_\ell,s_\ell)\in\{-2,-1,0,1,2\}^2$ satisfying
\eqref{eq:gauss-linear} and \eqref{eq:gauss-energy}, and then impose the
listed Gaussian correlations in order.  Every coefficient in the box
\eqref{eq:gauss-box} is individually realizable as the value of a four-entry
fiber under $Y\mapsto i$.  However, the displayed system records only these
character values; it does not record the principal sum or the number of
nonzero entries in each fiber.  It is therefore a necessary, but not
sufficient, condition for a lift to a $\{0,\pm1\}$-valued element.  In each
case the listed necessary correlation equations already leave no survivor,
so no additional fiber conditions or correlations are required.  The
program \texttt{verify\_lifting.cpp} performs the enumeration with exact
integer arithmetic and checks the displayed orbit data and counts.
\end{proof}

\begin{theorem}\label{thm:weight64}
There are no $\CW(140,64)$, $\CW(180,64)$, or $\CW(196,64)$.
\end{theorem}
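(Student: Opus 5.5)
The plan is to combine Theorem~\ref{thm:gaussian-reduction} with Lemma~\ref{lem:gaussian-enumeration}. The three orders are $140=4\cdot35$, $180=4\cdot45$ and $196=4\cdot49$, and in each case $m\in\{35,45,49\}$ is odd. Hence $\gcd(m,4)=1$ and $C_{4m}\cong C_m\times C_4$, which is exactly the setting of Theorem~\ref{thm:gaussian-reduction}. Suppose a $\CW(4m,64)$ exists for one of these $m$. That theorem then produces $E=\sum_j z_jX^j\in\Z[i][C_m]$ satisfying \eqref{eq:gauss-norm}--\eqref{eq:gauss-fixed}.

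First I would translate these conditions into the orbit system. By \eqref{eq:gauss-fixed}, $z_j$ is constant on each multiplication-by-$2$ orbit $O_\ell$ of $\Z_m$. Write the common value as $c_\ell=r_\ell+is_\ell$; by \eqref{eq:gauss-box}, $r_\ell,s_\ell\in\{-2,\dots,2\}$.

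Taking real and imaginary parts of \eqref{eq:gauss-sum} gives the two linear equations \eqref{eq:gauss-linear}. The identity coefficient of \eqref{eq:gauss-norm} is $\sum_j|z_j|^2=64$, which is \eqref{eq:gauss-energy}. For every nonzero shift $h$, the coefficient of $X^h$ in $EE^{(-1)}$ is $\Gamma_h(E)$, so \eqref{eq:gauss-norm} forces $\Gamma_h(E)=0$. The vector $(c_\ell)$ is therefore a full solution of the bounded system in Lemma~\ref{lem:gaussian-enumeration}.

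Lemma~\ref{lem:gaussian-enumeration} states that no full solution exists for $m=35$, $45$ or $49$. Already the filtering through $\Gamma_5$, $\Gamma_3$ and $\Gamma_1$ respectively leaves no survivor. This contradiction shows that none of $\CW(140,64)$, $\CW(180,64)$, $\CW(196,64)$ exists.

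All the substance lies in the earlier steps: the multiplier normalization in Theorem~\ref{thm:gaussian-reduction}, which needs Lemma~\ref{lem:prime2} and $\gcd(64,m)=1$, and the exact enumeration. The only point to check here is that the hypotheses hold for each $m$: $m$ is odd and coprime to $2$, so the multiplier choice $t\equiv1\pmod4$, $t\equiv2\pmod m$ is valid. I would also confirm that the orbit sizes listed in Lemma~\ref{lem:gaussian-enumeration} partition $\Z_m$ under multiplication by $2$: $35=1+12+12+3+4+3$, $45=1+12+4+6+12+4+2+4$, $49=1+21+21+3+3$. This makes the reduction to the enumerated system complete.
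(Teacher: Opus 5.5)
Your proposal is correct and follows the paper's proof: the paper writes the orders as $4m$ with $m\in\{35,45,49\}$, uses Theorem~\ref{thm:gaussian-reduction} to produce the orbit-constant Gaussian element, and concludes with Lemma~\ref{lem:gaussian-enumeration}. Your added steps, the translation into the orbit system and the check that the orbit sizes sum to $m$, only make explicit what the paper leaves implicit.
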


\begin{proof}
The three orders are $4m$ with $m=35,45,49$, respectively.  A matrix at any
of these orders would produce the element of $\Z[i][C_m]$ required by
Theorem~\ref{thm:gaussian-reduction}.  Lemma~\ref{lem:gaussian-enumeration}
shows that no such element exists.
\end{proof}

\section{Weight 49: the ordinary multiplier and contraction method}

The weight-$49$ cases form the most classical part of the paper.  Here the
prime-power multiplier acts directly in the integral group ring, or in an
integral contraction of it.  This is the multiplier-orbit framework of
\cite{AGZv1,AGZ2021}; the contribution is the resolution of the three
parameters and the accompanying exact verification data.

\subsection{A family containing \texorpdfstring{$\CW(120,49)$}{CW(120,49)}}

\begin{theorem}\label{thm:20q}
There is no $\CW(20q,49)$ for $1\le q\le6$.
\end{theorem}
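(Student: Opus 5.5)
Since every $20q$ with $1\le q\le 6$ divides $120$, the plan is to reduce the whole family to a single contracted problem of order $20$. If $A$ represents a $\CW(20q,49)$, apply the natural projection $\rho:C_{20q}\to C_{20}$. Its kernel has order $q$, and the image $B$ is an $\ICW_q(20,49)$. Because $|b_j|\le q\le 6$, every case is covered by an $\ICW_6(20,49)$. By \eqref{eq:sum-energy} we may take $\epsilon(B)=7$ and $\sum_j b_j^2=49$. Since $\gcd(20,7)=1$, Result~\ref{res:contracted-multiplier} with $n=7^2$ and $t=7$ shows that a translate of $B$ satisfies $B^{(7)}=B$. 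This makes the coefficients constant on the multiplication-by-$7$ orbits of $\Z_{20}$.

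Next I list those orbits. Since $7^2=49\equiv 9$ and $7^4\equiv 81\equiv 1 \pmod{20}$, the orbits are
\[
\{0\},\{5\},\{10\},\{15\},\{1,7,9,3\},\{2,14,18,6\},\{4,8,16,12\},\{11,17,19,13\}.
\]
This gives eight orbits, with sizes $(1,1,1,1,4,4,4,4)$. Call the common orbit values $x_0,x_5,x_{10},x_{15},y_1,y_2,y_4,y_{11}$, and write $x=x_0+x_5+x_{10}+x_{15}$ and $y=y_1+y_2+y_4+y_{11}$. The constraints are then
\[
x+4y=7,\qquad x_0^2+x_5^2+x_{10}^2+x_{15}^2+4(y_1^2+y_2^2+y_4^2+y_{11}^2)=49,
\]
with every unknown bounded by $6$ in absolute value.

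Before any enumeration I would look for a structural shortcut through a further contraction. Projecting to $C_5$ or $C_4$ gives $\ICW$'s of orders $5$ and $4$ with weight $49$. For order $4$ the character values must be Gaussian integers of norm $49$. Since $7$ is inert in $\Z[i]$, every character value $\chi(B)$ of the order-$4$ contraction lies in $\{\pm7,\pm7i\}$, so the fiber sums are heavily constrained. For order $5$, the prime $7$ is inert in $\Z[\zeta_5]$ because $7$ has order $4$ modulo $5$. Hence $\chi(B)$ is $7$ times a root of unity, up to sign.

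More generally, self-conjugacy of $7$ modulo $20$ forces every $\chi(B)$ to be $7$ times a root of unity for all characters of $C_{20}$. Fourier inversion then suggests that $B$ is essentially $7$ times a $\{0,\pm1\}$ element supported on one point, or a small combination of such terms. That combination must then be checked against the bound $|b_j|\le 6$. I expect this to yield a quick contradiction: the solutions with $\chi(B)\in 7\mu$ for all $\chi$ look like $\pm7g$, and a coefficient of $7$ violates $q\le 6$.

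The main obstacle is making this last deduction rigorous. It is not automatic, because elements of the form $7\mu\cdot(\text{unit})$ need not be translates. To handle it I would fall back on exact computation: enumerate the bounded orbit vectors satisfying the two equations above, then impose $\corr_t(B)=0$ for $1\le t\le 10$, which suffices by symmetry. I expect no survivors, and would check this with \texttt{verify\_lifting.cpp} as in the earlier sections. If a vector such as $B=7X^{j}$ appears, it is excluded by the bound $|b_j|\le 6$. That completes the argument for every $q\le 6$.
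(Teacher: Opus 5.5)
Your reduction is sound and matches the paper in substance. Applying Result~\ref{res:contracted-multiplier} with $t=7$ to the contraction $B$ is legitimate because $\gcd(20,49)=1$, and it is equivalent to the paper's use of the prime-power multiplier on $A$ before contracting.

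Two things go wrong afterwards. First, your orbit list is incorrect: $7\cdot5=35\equiv15\pmod{20}$, so $\{5,15\}$ is a single orbit, and there are seven orbits of sizes $1,1,2,4,4,4,4$. Treating $b_5,b_{15}$ as independent only relaxes the system, so it would not invalidate a negative search, but it should be fixed.

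Second, and this is the genuine gap, you never supply the step that actually proves nonexistence. Your shortcut rests on a false premise: $7$ is \emph{not} self-conjugate modulo $20$, since its powers modulo $20$ are $1,7,9,3$ and never $-1\equiv19$. Concretely, if $B^{(7)}=B$ and $\chi$ has order $20$, then $\chi(B)$ lies in the fixed field $\Q(\sqrt{-5})$ of $\sigma_7$. There $49=(2+3\sqrt{-5})(2-3\sqrt{-5})$, so $\chi(B)$ need not be $7$ times a root of unity. The obstacle you flagged is also not the real one. If every $\chi(B)$ were divisible by $7$, Fourier inversion would give $7\mid b_j$ for all $j$, and $|b_j|\le6$ would force $B=0$, an immediate contradiction. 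Finally, your fallback, ``enumerate and I expect no survivors,'' only predicts the statement to be proved; it does not prove it.

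The paper closes the gap by hand. Let $u,v,d,\alpha,\beta,\gamma,\delta$ be the orbit values on $\{0\},\{10\},\{5,15\}$ and the four orbits of size $4$, and put $r=u+v$, $x=\alpha+\delta$, $y=\beta+\gamma$. The augmentation, the energy, and the correlations at shifts $5$ and $10$ then reduce to $r+2d+4x+4y=7$, $dr+2xy=0$, and $r^2+4(d^2+x^2+y^2)=49$. Parity gives $d=2e$, and elimination gives $4s(5s-14)=0$ for $s=x+y$. Hence $s=0$, then $e=0$, and finally $uv=4(\alpha^2+\beta^2)$ with $u+v=7$ and $|u|,|v|\le6$, which is impossible.

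Your character idea can also be made to work once the multiplier is used properly.
\begin{itemize}
\item On a $7$-fixed $B$, the characters of order dividing $4$ or $10$ take rational values $\pm7$.
\item Comparing them forces $\alpha+\delta=\beta+\gamma=0$, $d=0$, and $u+v=7$.
\item The character $X\mapsto i$ then gives $u-v+8\gamma=\pm7$.
\item An order-$20$ character gives $(u-v-2\gamma)^2+20\alpha^2=49$.
\end{itemize}
Together these force $\alpha=\gamma=0$ and $B\in\{7,7X^{10}\}$, which violates $|b_j|\le6$.
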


\begin{proof}
Suppose a $\CW(20q,49)$ exists, represented by $A\in\Z[C_{20q}]$.
Since $q\le6$, $\gcd(20q,7)=1$.  By
Result~\ref{res:prime-multiplier}, assume $A^{(7)}=A$.  Contract onto $C_{20}$:
\[
        B=\sum_{j=0}^{19}b_jX^j.
\]
Then
\begin{equation}\label{eq:20-basic}
 BB^{(-1)}=49,\qquad \sum_{j=0}^{19}b_j=7,
 \qquad |b_j|\le q\le6,
 \qquad B^{(7)}=B.
\end{equation}
The multiplication-by-$7$ orbits on $\Z_{20}$ are
\begin{align*}
&\{0\},\quad \{10\},\quad \{5,15\},\\
&\{1,3,7,9\},\quad
 \{2,6,14,18\},\quad
 \{4,8,12,16\},\quad
 \{11,13,17,19\}.
\end{align*}
Let their coefficients be
$u,v,d,\alpha,\beta,\gamma,\delta$, respectively.  The augmentation and energy equations, together with the correlations at
shifts $5$ and $10$ after division by their common factor $2$, give
\begin{align}
 u+v+2d+4(\alpha+\beta+\gamma+\delta)&=7,\label{eq:20-sum}\\
 u^2+v^2+2d^2+4(\alpha^2+\beta^2+\gamma^2+\delta^2)&=49,\label{eq:20-energy}\\
 2(\alpha+\delta)(\beta+\gamma)+d(u+v)&=0,\label{eq:20-five}\\
 4\alpha\delta+4\beta\gamma+d^2+uv&=0.\label{eq:20-ten}
\end{align}
Put
\[
 r=u+v,\qquad x=\alpha+\delta,\qquad y=\beta+\gamma.
\]
Equations \eqref{eq:20-sum}, \eqref{eq:20-five}, and the combination of
\eqref{eq:20-energy} with \eqref{eq:20-ten} become
\begin{align}
 r+2d+4x+4y&=7,\label{eq:20-rsum}\\
 dr+2xy&=0,\label{eq:20-rprod}\\
 r^2+4(d^2+x^2+y^2)&=49.\label{eq:20-square}
\end{align}
Equation \eqref{eq:20-rsum} makes $r$ odd, and
\eqref{eq:20-rprod} then makes $d$ even.  Write $d=2e$ and $s=x+y$.
We have
\[
 r=7-4(e+s),\qquad xy=-er.
\]
Substitution into \eqref{eq:20-square} yields
\[
        4s(5s-14)=0,
\]
so $s=0$.  Hence
\[
 r=7-4e,\qquad y=-x,\qquad x^2=e(7-4e).
\]
Since $r^2\le49$, $e\in\{0,1,2,3\}$, and the corresponding right-hand
sides are $0,3,-2,-15$.  Only $e=0$ is possible.  Therefore
\[
 u+v=7,\qquad d=0,\qquad \delta=-\alpha,\qquad\gamma=-\beta.
\]
Equation \eqref{eq:20-ten} now gives
\[
        uv=4(\alpha^2+\beta^2).
\]
Because $|u|,|v|\le6$ and $u+v=7$, we have
$uv\in\{6,10,12\}$.  The first two values are not divisible by $4$; the
last would give $\alpha^2+\beta^2=3$, impossible modulo $4$.  This is a
contradiction.
\end{proof}

\begin{corollary}\label{cor:120}
There is no $\CW(120,49)$.
\end{corollary}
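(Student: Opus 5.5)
This follows directly from Theorem~\ref{thm:20q} by specializing the parameter. Since $120=20\cdot 6$, the case $q=6$ of that theorem is exactly the statement that no $\CW(120,49)$ exists. So the proof is a one-line specialization, and I will only check that the hypotheses used in the proof of Theorem~\ref{thm:20q} hold at $q=6$.

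There are two such hypotheses.

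\begin{itemize}
\item $\gcd(120,7)=1$. This lets Result~\ref{res:prime-multiplier} be applied with $p=7$, $e=1$, so a translate of $A$ satisfies $A^{(7)}=A$.
\item The natural projection $C_{120}\to C_{20}$ has kernel of order $6$. Hence every coefficient $b_j$ of the contraction satisfies $|b_j|\le 6$. This bound is what makes $uv\in\{6,10,12\}$ when $u+v=7$, and that is where the contradiction is reached.
\end{itemize}

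The contraction onto $C_{20}$ does not need $\gcd(6,20)=1$. We only use the quotient map $\rho$, which preserves $BB^{(-1)}=49$ and satisfies $|b_j|\le d$, as in \eqref{eq:icw-basic}. The fact that multiplication by $7$ commutes with $\rho$, giving $B^{(7)}=B$, is automatic because $\rho$ is a homomorphism.

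I therefore expect no real obstacle. The only point worth stating explicitly is that, at the extreme value $q=6$, the bound $|b_j|\le q$ still excludes the pair $(u,v)=(7,0)$. With the bound exactly $6$, the admissible pairs are $(1,6),(2,5),(3,4)$ up to order, with products $6$, $10$, $12$. The argument of Theorem~\ref{thm:20q} then gives the contradiction unchanged.
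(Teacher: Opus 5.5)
Your proposal is correct and matches the paper, which obtains the corollary directly as the case $q=6$ of Theorem~\ref{thm:20q}, since $120=20\cdot 6$. Your checks are accurate: $\gcd(120,7)=1$; the contraction to $C_{20}$ has coefficient bound $6$ without needing $\gcd(6,20)=1$; and $u+v=7$ with $|u|,|v|\le 6$ forces $uv\in\{6,10,12\}$.
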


\subsection{The case \texorpdfstring{$\CW(116,49)$}{CW(116,49)}}

\begin{theorem}\label{thm:116}
There is no $\CW(116,49)$.
\end{theorem}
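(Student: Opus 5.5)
We follow the weight-$49$ template of Theorem~\ref{thm:20q}. Since $116=4\cdot29$ and $\gcd(116,7)=1$, Result~\ref{res:prime-multiplier} lets us assume $A^{(7)}=A$ after translation, with $\epsilon(A)=7$ by \eqref{eq:augmentation}. The order of $7$ modulo $29$ is $7$, because $7^7\equiv1\pmod{29}$. Hence multiplication by $7$ on $\Z_{29}$ has the fixed point $0$ and four orbits of size $7$. It is convenient to contract either onto $C_{29}$ (an $\ICW_4(29,49)$) or onto $C_{58}$ (an $\ICW_2(58,49)$). The full group $C_{116}$ can also be handled directly, because $7\equiv3\pmod4$ acts on the $C_4$ factor by inversion.

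The first step is to work in $C_{29}$. Let $x_0$ be the coefficient at $0$ and $x_1,\dots,x_4$ the values on the four orbits, with $|x_i|\le4$. Then
\[
x_0+7(x_1+x_2+x_3+x_4)=7,\qquad x_0^2+7(x_1^2+x_2^2+x_3^2+x_4^2)=49.
\]
The first equation gives $7\mid x_0$, so $x_0=0$. Hence $\sum x_i=1$ and $\sum x_i^2=7$, and in particular no $|x_i|$ can be as large as $3$. Up to order, the candidates are therefore $(2,-1,1,-1)$-type vectors and permutations of $(2,1,-1,-1)$, together with anything else satisfying the two equations. We then impose $\corr_h=0$ for the orbit-representative shifts. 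Each such correlation is a quadratic form in the $x_i$ whose coefficients are cyclotomic-class numbers of order $7$ modulo $29$, and we compute these explicitly.

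If some $C_{29}$ solution survives, we refine to $C_{58}$ or $C_{116}$. On $C_{58}\cong C_{29}\times C_2$, the multiplier $7$ fixes the $C_2$ coordinate. The coefficients are then constant on the sets $O\times\{r\}$, giving ten variables, each bounded by $2$. These satisfy the sum and energy equations with energy $49$, and we require all correlations to vanish. The fiber sums must also reproduce the $C_{29}$ solution. Alternatively, we apply the character $Y\mapsto-1$ of the $C_4$ kernel as in Lemma~\ref{lem:C4-character}. This gives a second $\ICW_4(29,49)$, which is again $7$-invariant, so it must also appear in the classified list. Combining the two lists should yield a parity or fiber-compatibility contradiction in the manner of Theorem~\ref{thm:140}.

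The main obstacle is that the $C_{29}$ contraction alone probably admits a genuine $\ICW_4(29,49)$ solution. The weight condition is mild there, and a $\CW(29,\cdot)$-like structure may exist. If so, the final step must be the exact enumeration on $C_{116}$ itself with the $7$-orbits. The orbits of $7$ on $\Z_{116}$ have sizes dividing $14$, which gives roughly a dozen $\{0,\pm1\}$-valued orbit variables. Enumerating these with augmentation $7$ and support size $49$, then filtering by correlations, is a small exact computation of the kind done by \texttt{verify\_lifting.cpp}. I would attempt the hand argument on $C_{58}$ first and fall back to this enumeration if it fails.
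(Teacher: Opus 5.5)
Your plan has a genuine gap: the only branch that actually closes is the fallback, and you leave it unexecuted. That fallback puts $\{0,\pm1\}$-valued unknowns on the fifteen multiplication-by-$7$ orbits of $\Z_{116}$, imposes augmentation and energy, then filters by correlations. It is exactly the paper's proof, where the counts run $6{,}088\to624\to40\to16\to0$ and $\corr_4$ kills the last $16$ vectors. But you neither carry it out nor state its outcome, so the conclusion rests on a computation whose result you do not supply.

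The routes you intend to try first cannot succeed:
\begin{itemize}
\item \textbf{$C_{29}$.} The system has genuine solutions. With $H=\langle7\rangle\le\Z_{29}^*$, the vectors $(2,-1,1,-1)$ and $(2,-1,-1,1)$ on the cosets $H,2H,4H,8H$, with value $0$ at $0$, give $7$-fixed elements with $BB^{(-1)}=49$. Your candidate list also omits the $(-2,1,1,1)$ family, though it fails.
\item \textbf{$C_{58}$.} Putting such a $B$ in the layer $C_{29}\times\{0\}$ gives a $7$-fixed $\ICW_2(58,49)$, so your ten-variable system is solvable too.
\item \textbf{Pairing $B$ with $F$.} Combining $B$ with its $Y\mapsto-1$ image $F$ yields no parity or fiber contradiction by itself, since $F=B$ is compatible.
\end{itemize}

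The missing idea is a consequence of your own remark that $7$ inverts $C_4$. Write $A=A_0+A_1Y+A_2Y^2+A_3Y^3$. Then $A^{(7)}=A$ gives $A_1^{(7)}=A_3$ and $A_3^{(7)}=A_1$. Since $7$ has odd order modulo $29$, this forces $A_1=A_3$, with each $A_r$ fixed by $7$. Hence
\[
B=A_0+A_2+2A_1,\qquad F=A_0+A_2-2A_1,\qquad E=A_0-A_2,
\]
where $E$ is the (real) image under $Y\mapsto i$. All three are $7$-fixed, satisfy $ZZ^{(-1)}=49$, and have augmentation $\pm7$. Since their coefficient at $0$ has absolute value at most $4$, the sum and energy equations force each to vanish at $0$, equal $\pm2$ on exactly one orbit of $\Z_{29}^*$, and equal $\pm1$ on the other three.

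Now $B-F=4A_1$, and $b_j-f_j\in\{0,\pm4\}$ can be nonzero only where $|b_j|=2$. So $A_1$ equals a constant $c$ on the $\pm2$-orbit of $B$ and vanishes elsewhere. Then $\epsilon(F)=7-28c=\pm7$ forces $c=0$, so $B=A_0+A_2$. On the $\pm2$-orbit of $B$ this gives $a_{j,0}=a_{j,2}$, so $E$ vanishes on that orbit. That contradicts $E$ being nonzero on every orbit.

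This is the Theorem~\ref{thm:140}-style contradiction you were reaching for. It needs both the relation $A_1=A_3$ and the third character $Y\mapsto i$, and it would replace the paper's enumeration with a computer-free proof.
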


\begin{proof}
Suppose $A$ is a $\CW(116,49)$.  By
Result~\ref{res:prime-multiplier}, assume $A^{(7)}=A$.  The multiplication-by
$7$ orbits on $\Z_{116}$ have representatives
\begin{equation}\label{eq:116-reps}
 0,1,2,3,4,5,6,8,10,15,16,29,30,32,58
\end{equation}
and sizes
\begin{equation}\label{eq:116-sizes}
 1,14,7,14,7,14,7,7,7,14,7,2,7,7,1.
\end{equation}
Let $c_0,\ldots,c_{14}\in\{-1,0,1\}$ be the orbit coefficients and let
$w_i$ denote the corresponding sizes.  Necessarily
\begin{equation}\label{eq:116-sum-energy}
        \sum_iw_ic_i=7,
        \qquad
        \sum_iw_ic_i^2=49.
\end{equation}
An exact enumeration gives the following filtering counts:
\begin{center}
\begin{tabular}{@{}lr@{}}
\toprule
Conditions & Vectors remaining\\
\midrule
\eqref{eq:116-sum-energy} & $6{,}088$\\
Additionally $\corr_1=0$ & $624$\\
Additionally $\corr_2=0$ & $40$\\
Additionally $\corr_3=0$ & $16$\\
Additionally $\corr_4=0$ & $0$\\
\bottomrule
\end{tabular}
\end{center}
For each of the $16$ vectors surviving through $\corr_3=0$, one has
$\corr_4\in\{-1,-3\}$.  They are listed in
Appendix~\ref{app:116}.  Thus no orbit-constant element satisfies
$AA^{(-1)}=49$.
\end{proof}

\subsection{The case \texorpdfstring{$\CW(192,49)$}{CW(192,49)}}

\begin{theorem}\label{thm:192}
There is no $\CW(192,49)$.
\end{theorem}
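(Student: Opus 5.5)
We would follow the weight-$49$ template of Theorems~\ref{thm:20q} and~\ref{thm:116}.  Since $192=2^6\cdot 3$ is prime to $7$, Result~\ref{res:prime-multiplier} lets us replace $A$ by a translate with $A^{(7)}=A$.  Orbit-constant enumeration directly on $\Z_{192}$ looks heavy, because $\operatorname{ord}_{64}(7)=16$ gives many orbits.  We therefore first contract onto a smaller quotient on which $7$ still acts.  The natural choice is $C_{64}$, or possibly $C_{96}$ or $C_{48}$, via the projection $\rho$.  This gives $B=\rho(A)$, an $\ICW_3(64,49)$ (with the analogous bound for another quotient), satisfying $B^{(7)}=B$, $\epsilon(B)=7$, $\sum b_j^2=49$, and $\corr_t(B)=0$ for $t\ne0$.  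The multiplier passes to the contraction because $\rho(A^{(7)})=\rho(A)^{(7)}$.

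The multiplication-by-$7$ orbits on $\Z_{64}$ are small in number: $\{0\}$, $\{32\}$, $\{16,48\}$, the odd-multiple-of-$8$ classes, and so on.  For a unit $u$, the orbit of $2^k u$ has size $\operatorname{ord}_{2^{6-k}}(7)$.  Let $x_i$ be the orbit coefficients, bounded by $|x_i|\le 3$.  As in \eqref{eq:20-sum}--\eqref{eq:20-ten}, we would write out the sum and energy equations together with the correlations at the large shifts $32$ and $16$, and possibly $8$, which involve only a few orbit products.  We would then try the same algebraic elimination that worked for order $20$.

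The strategy is to reduce modulo small powers of $2$ and combine the constraints into a sum-of-squares identity.  For example, the augmentation and the identity $\sum b_j^2=49$ together force a parity pattern.  Contracting further to $C_{32}$, $C_{16}$, and so on, gives nested $\ICW$ conditions.  Because $49$ is odd while the $2$-adic structure is rich, we expect that at some level the augmentation must concentrate on a single coset.  This is the familiar phenomenon that the contraction to $C_2$ gives $b_0+b_1=7$ and $b_0^2+b_1^2=49$, which forces $\{b_0,b_1\}=\{7,0\}$.  We would then propagate this self-similarly up to $C_{64}$, or to the $C_3$ factor.

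Two concrete fallbacks are available.  Applying the same idea to $C_4$ gives $b_0+b_1+b_2+b_3=7$ with energy $49$, and the correlation at shift $2$ pins the support.  If the contracted system does turn out to be solvable, which is plausible since $\CW(64,49)$-type integer objects may exist, we would instead lift with the kernel.  That means working in $\Z[C_{64}\times C_3]$ directly on $7$-orbits of $\Z_{192}$, with coefficients in $\{0,\pm1\}$.  There we would run an exact enumeration filtered by successive $\corr_t$, exactly as in Theorem~\ref{thm:116}.

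The main obstacle is choosing the quotient so that the orbit system is small, yet the coefficient bound $d$ is tight enough to yield a contradiction.  The first attempt will be the full orbit enumeration on $\Z_{192}$ with $\{0,\pm1\}$ coefficients, filtering successively by $\corr_1,\corr_2,\ldots$.  The orbit count is modest, at roughly a few dozen orbits, which makes this exhaustive computation feasible.  The final certificate would record the filtering counts in a table as for order $116$.
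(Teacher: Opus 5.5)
Your setup matches the paper's. Result~\ref{res:prime-multiplier} gives $A^{(7)}=A$, and contracting onto $C_{64}$ yields a $7$-invariant $\ICW_3(64,49)$ whose orbit coefficients lie in $\{-3,\dots,3\}$. (A small correction: $\operatorname{ord}_{64}(7)=8$, not $16$. There are $17$ orbits on $\Z_{64}$, and $51$ on $\Z_{192}$ because $7\equiv1\pmod 3$.)

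What is missing is the step that actually produces the contradiction. In the paper this step is an exhaustive exact enumeration of the $17$-orbit system. Exactly $22{,}880{,}810$ vectors meet the augmentation and energy conditions, successive correlation filtering leaves four, and each of those has $\corr_{24}=12$. You never commit to this computation, and you even regard solvability of the contracted system as plausible. None of the substitutes you offer closes the argument.

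\textbf{Hand elimination.} The order-$20$ elimination succeeded because $\Z_{20}$ has only seven orbits, and four equations collapse to a sum-of-squares identity. You exhibit no analogous identity for seventeen unknowns.

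\textbf{Concentration.} This idea is sound only through $C_8$. Since $7\equiv-1\pmod 8$, the primes above $7$ in $\Q(\zeta_8)$ are stable under complex conjugation. Hence the character values are $7$ times roots of unity, and the image in $C_8$ is $7X^j$. Modulo $16$, however, $\langle 7\rangle=\{1,7\}$ does not contain $-1$, and the propagation genuinely fails. In $\Z[C_{16}]$ take
\[
 B_{16}=4+3X^{8}+(1-X^{8})\bigl(X+X^{7}+2X^{2}-2X^{6}-X^{3}-X^{5}\bigr).
\]
It is fixed by $7$, has augmentation $7$, and satisfies $B_{16}B_{16}^{(-1)}=49$. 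Its coefficients are at most $4$, well within the bound $12$ for $C_{192}\to C_{16}$. It is not supported on a single coset. So you cannot formally reach $B=7X^j$, which would contradict $|b_j|\le 3$. The bound $3$ has to be exploited through the full orbit search on $C_{64}$.

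\textbf{Search on $\Z_{192}$.} Your fallback, the orbit search on $\Z_{192}$ with $\{0,\pm1\}$ coefficients, is valid in principle but is not carried out. Its set of sum-and-energy solutions is many orders of magnitude larger than the $22{,}880{,}810$ vectors of the contracted system, so ``a few dozen orbits'' does not establish feasibility. It is also unnecessary, since the contraction to $C_{64}$ alone already admits no solution.
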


\begin{proof}
Suppose $A$ is a $\CW(192,49)$.  By
Result~\ref{res:prime-multiplier}, assume $A^{(7)}=A$.  Contract onto $C_{64}$.
The image $B$ is an $\ICW_3(64,49)$ fixed by $7$.  The orbits have
representatives
\begin{equation}\label{eq:64-reps}
 0,1,2,3,4,6,8,9,11,12,16,18,22,24,32,36,44
\end{equation}
and sizes
\begin{equation}\label{eq:64-sizes}
 1,8,4,8,2,4,2,8,8,2,2,4,4,2,1,2,2.
\end{equation}
Let the orbit coefficients be $c_0,\ldots,c_{16}\in\{-3,\ldots,3\}$,
and let $w_i$ be the corresponding orbit sizes.  The augmentation and energy
equations are
\[
        \sum_{i=0}^{16}w_ic_i=7,
        \qquad
        \sum_{i=0}^{16}w_ic_i^2=49.
\]
They leave $22{,}880{,}810$ vectors.  The successive exact filtering counts
are
\begin{center}
\begin{tabular}{@{}lr@{}}
\toprule
Last correlation imposed & Vectors remaining\\
\midrule
none & $22{,}880{,}810$\\
$\corr_1$ & $5{,}952{,}866$\\
$\corr_2$ & $455{,}924$\\
$\corr_3$ & $413{,}640$\\
$\corr_4$ & $14{,}670$\\
$\corr_5$ & $14{,}670$\\
$\corr_6$ & $7{,}836$\\
$\corr_7$ & $7{,}836$\\
$\corr_8$ & $338$\\
$\corr_{11}$ & $338$\\
$\corr_{12}$ & $22$\\
$\corr_{16}$ & $4$\\
\bottomrule
\end{tabular}
\end{center}
In the orbit order \eqref{eq:64-reps}, the four remaining vectors are
\begin{align*}
&(-1,0,0,0,-1,0, 3,0,0, 2,2,0,0,-1,0, 1,-2),\\
&(-1,0,0,0, 1,0, 3,0,0,-2,2,0,0,-1,0,-1, 2),\\
&(0,0,0,0,-1,0,-1,0,0, 2,2,0,0, 3,-1, 1,-2),\\
&(0,0,0,0, 1,0,-1,0,0,-2,2,0,0, 3,-1,-1, 2).
\end{align*}
Each has $\corr_{24}=12$, rather than zero.  Hence no projected matrix, and
therefore no $\CW(192,49)$, exists.
\end{proof}

\section{Exact computation and reproducibility}

Two self-contained C++17 programs accompany the paper.

\begin{itemize}
\item \texttt{verify\_lifting.cpp} reproduces Proposition~\ref{prop:icw35},
checks the factorization \eqref{eq:factor35}, enumerates all $3^{12}$ words of
the ternary $[35,12]$ code, partitions its $420$ weight-$12$ words into the
six translation-and-sign classes, tests all $1{,}062{,}882$ normalized
Eisenstein-unit lifts, verifies the local order-$4$ character calculation in
Theorem~\ref{thm:140}, and reproduces all Gaussian orbit counts in
Lemma~\ref{lem:gaussian-enumeration}.
\item \texttt{verify\_weight49.cpp} reproduces the orbit data and every exact
filtering count used for Theorems~\ref{thm:20q}, \ref{thm:116}, and
\ref{thm:192}.
\end{itemize}

The programs use integer arithmetic only.  No floating-point comparison,
randomized search, or heuristic acceptance criterion occurs.  Pruning is
limited to exact feasibility tests for the remaining augmentation and energy.
For the Eisenstein search, the program checks shifts $1$ through $17$;
shifts $18$ through $34$ are their complex conjugates.  The reciprocal
ternary code is covered by the involution, as explained in the proof of
Theorem~\ref{thm:105}.  For the Gaussian cases, the listed correlations are necessary conditions,
but they already eliminate every vector in the bounded search space defined
by \eqref{eq:gauss-box}; this space contains every character image arising
from a four-entry fiber.

The recorded outputs were reproduced with GCC~14.2.0 using
\begin{verbatim}
g++ -O3 -std=c++17 verify_lifting.cpp -o verify_lifting
g++ -O3 -std=c++17 verify_weight49.cpp -o verify_weight49
./verify_lifting
./verify_weight49
\end{verbatim}
The source files and a README file are included in \url{https://github.com/mmtan/cw-verification}.  The README specifies the exact
representations used for Eisenstein and Gaussian integers, the integer types,
and elementary bounds excluding overflow.  The files specify the complete
finite search spaces, all filtering counts, the final surviving vectors, and
the nonzero correlations that eliminate them. 

\section{Discussion and conclusion}

\begin{proof}[Proof of Theorem~\ref{thm:main}]
The nonexistence of $\CW(105,36)$ and $\CW(140,36)$ follows from
Theorems~\ref{thm:105} and \ref{thm:140}.  The three weight-$64$ cases follow
from Theorem~\ref{thm:weight64}.  The weight-$49$ cases follow from
Corollary~\ref{cor:120} and Theorems~\ref{thm:116} and \ref{thm:192}.
\end{proof}

The weight-$49$ proofs use the established multiplier-orbit and contraction
method directly.  The additional ingredient in the weight-$36$ and
weight-$64$ cases is character evaluation on the kernel of a contraction.
The principal character records only the sum within each fiber, whereas a
nonprincipal character records a weighted sum and can distinguish fibers
having the same principal sum.  For order $105$ this produces an
Eisenstein-integer equation and, after reduction modulo $1-\omega$, a
ternary cyclic-code obstruction.  For order $140$ the character
$Y\mapsto-1$ gives an immediate incompatibility with the classified
contraction.  For weight $64$ the faithful character $Y\mapsto i$ is applied
before the generalized multiplier, reducing the problem to exact bounded
correlation systems over $\Z[i]$.

The use of lifting here is complementary to the relative-difference-set
lifting studied in \cite{Gordon2026}.  In that setting a difference set is
lifted to a relative difference set and then used to construct a circulant
weighing matrix.  Here the starting point is a surviving integer contraction
of a signed group-ring element, and the objective is to prove that no
$\{0,\pm1\}$-valued lift exists.  The character values of the contraction
kernel provide the missing fiber information.  This approach may be useful
for other parameters for which contraction leaves only a small number of
candidate integer circulant weighing matrices.

\section{AI usage disclosure}

\textbf{Use of generative AI.}
During the development of this work, OpenAI GPT-5.6 Pro was used interactively to explore proof approaches, identify gaps in preliminary arguments, assist in the development and checking of exact verification programs, and improve the exposition. All mathematical arguments and computational results reported in the paper were subsequently checked by the author, who takes full responsibility for the content.

\appendix

\section{The six ternary equivalence classes}\label{app:ternary-classes}

A sign $+$ at position $j$ means coefficient $1\in\F_3$, and a sign $-$ means
coefficient $-1\in\F_3$.  The six representatives, under cyclic translation
and negation, are
\begin{align*}
&9^+,13^+,14^-,15^-,19^+,20^+,22^+,23^-,27^-,28^+,33^-,34^-,\\
&7^+,10^+,12^+,14^-,16^+,17^-,19^-,24^+,27^-,30^-,31^-,34^+,\\
&6^+,13^-,14^+,18^+,19^-,21^-,24^+,26^+,27^+,31^-,32^-,34^-,\\
&6^+,8^+,12^+,13^+,18^-,22^-,24^+,27^-,31^-,32^+,33^-,34^-,\\
&6^+,8^-,12^-,14^-,19^+,21^+,22^+,24^+,26^-,31^-,33^+,34^-,\\
&6^+,7^+,10^-,11^-,14^-,16^-,20^-,25^+,27^-,30^+,31^+,34^+.
\end{align*}
For each line, the program tests all $3^{11}$ normalized assignments of
powers of $\omega$ and finds no solution of $DD^{(-1)}=12$.

\section{Final orbit vectors for \texorpdfstring{$\CW(116,49)$}{CW(116,49)}}\label{app:116}

The following are all vectors satisfying \eqref{eq:116-sum-energy} and
$\corr_1=\corr_2=\corr_3=0$, in the orbit order
\eqref{eq:116-reps}.  The final column is $\corr_4$.

\begin{center}
\scriptsize
\renewcommand{\arraystretch}{1.06}
\begin{tabular}{@{}>{\ttfamily}l r@{}}
\toprule
\multicolumn{1}{c}{Orbit vector} & $\corr_4$\\
\midrule
0 0 -1 0 -1 0 0 1 1 0 -1 0 1 1 0 & -3\\
0 0 -1 0 -1 0 1 1 1 0 -1 0 0 1 0 & -1\\
0 0 -1 0 0 0 1 -1 -1 0 1 0 1 1 0 & -1\\
0 0 -1 0 0 0 1 1 -1 0 1 0 1 -1 0 & -3\\
0 0 -1 0 1 0 0 -1 1 0 1 0 1 -1 0 & -3\\
0 0 -1 0 1 0 1 -1 -1 0 0 0 1 1 0 & -3\\
0 0 -1 0 1 0 1 -1 1 0 1 0 0 -1 0 & -1\\
0 0 -1 0 1 0 1 1 -1 0 0 0 1 -1 0 & -1\\
0 0 1 0 -1 0 0 1 -1 0 -1 0 1 1 0 & -1\\
0 0 1 0 -1 0 1 1 -1 0 -1 0 0 1 0 & -3\\
0 0 1 0 0 0 -1 -1 1 0 1 0 -1 1 0 & -1\\
0 0 1 0 0 0 -1 1 1 0 1 0 -1 -1 0 & -3\\
0 0 1 0 1 0 -1 -1 1 0 0 0 -1 1 0 & -3\\
0 0 1 0 1 0 -1 1 1 0 0 0 -1 -1 0 & -1\\
0 0 1 0 1 0 0 -1 -1 0 1 0 1 -1 0 & -1\\
0 0 1 0 1 0 1 -1 -1 0 1 0 0 -1 0 & -3\\
\bottomrule
\end{tabular}
\end{center}

\end{document}